\newtheorem{theorem}{Theorem}
\newtheorem{lemma}{Lemma}
\newtheorem{remark}{Remark}
\newtheorem{corollary}{Corollary}
\begin{document}
\author{G. Tutberidze}
\title[$T$ means]{Sharp $\left( H_{p},L_{p}\right) $ type
inequalities of maximal operators of $T$ means with respect to Vilenkin systems with monotone coefficients}
\address{G. Tutberidze, The University of Georgia, School of Science and Technology 77a Merab Kostava St, Tbilisi, 0128, Georgia.}
\email{g.tutberidze@ug.edu.ge}
\thanks{The research was supported by Shota Rustaveli National Science Foundation grant no.FR-19-676.}
\date{}
\maketitle

\begin{abstract}
In this paper we prove and discuss some new $\left( H_{p},L_{p}\right) $
type inequalities of maximal operators of  $T$ means with respect to the Vilenkin systems with
monotone coefficients. We also apply these inequalities to prove
strong convergence theorems of such $T$ means. We also show that these
results are the best possible in a special sense. As applications, both
some well-known and new results are pointed out.
\end{abstract}

\bigskip \textbf{2000 Mathematics Subject Classification.} 42C10, 42B25.

\textbf{Key words and phrases:} Vilenkin groups,  Vilenkin systems, partial sums of Vilenkin-Fourier series, $T$ means, Vilenkin-Nörlund means, Fejér mean, Riesz means,  martingale Hardy spaces, $L_{p}$ spaces, $weak-L_{p}$ spaces, maximal operator, strong convergence, inequalities.

\section{Introduction}

The definitions and notations used in this introduction can be
found in our next Section. 

It is well-known that Vilenkin systems do not form bases in the space $L_{1}$%
. Moreover, there is a function in the Hardy space $H_p$, such that the
partial sums of $f$ are not bounded in $L_p$-norm, for $0<p\leq 1.$ Approximation properties of Vilenkin-Fourier series with respect to one- and two-dimensional case can be found in \cite{PTW2} and \cite{tep9}.  Simon \cite{Si4} proved
that there exists an absolute constant $c_{p},$ depending only on $p,$ such
that the inequality 
\begin{equation*}
\frac{1}{\log ^{\left[ p\right] }n}\overset{n}{\underset{k=1}{\sum }}\frac{%
	\left\Vert S_{k}f\right\Vert _{p}^{p}}{k^{2-p}}\leq c_{p}\left\Vert
f\right\Vert _{H_{p}}^{p}\ \ \ \left( 0<p\leq 1\right)
\end{equation*}%
holds for all $f\in H_{p}$ and $n\in \mathbb{N}_{+},$ where $\left[ p\right] 
$ denotes the integer part of $p.$ For $p=1$ analogous results with respect
to more general systems were proved in Blahota \cite{b} and Gát \cite{Ga1} and for $
0<p<1 $ simpler proof was given in Tephnadze \cite{tep8}. Some new strong convergence result for partial sums with respect to Vilenkin system was considered in Tutberidze \cite{tut1}. 

In the one-dimensional case the weak (1,1)-type inequality for the maximal operator of Fejér means 
$
\sigma ^{\ast }f:=\sup_{n\in \mathbb{N}}\left\vert \sigma _{n}f\right\vert
$
can be found in Schipp \cite{Sc} for Walsh series and in Pál, Simon \cite{PS}
for bounded Vilenkin series. Fujji \cite{Fu} and Simon \cite{Si2} verified
that $\sigma ^{\ast }$ is bounded from $H_{1}$ to $L_{1}$. Weisz \cite{We2}
generalized this result and proved boundedness of $\ \sigma ^{\ast }$ \ from
the martingale space $H_{p}$ to the space $L_{p},$ for $p>1/2$. Simon \cite%
{Si1} gave a counterexample, which shows that boundedness does not hold for $%
0<p<1/2.$ A counterexample for $p=1/2$ was given by Goginava \cite{Go} (see
also Tephnadze \cite{tep1}). Moreover, Weisz \cite{We4} proved that the maximal
operator of the Fejér means $\sigma ^{\ast }$ is bounded from the Hardy
space $H_{1/2}$ to the space $weak-L_{1/2}$. In \cite{tep2} and \cite{tep3} 
the following result  was proved:

\textbf{Theorem T1:} Let $0<p\leq 1/2.$ Then the following weighted maximal operator of Fejér means 
\begin{equation*}
\widetilde{\sigma}_p^{\ast }f:=\sup_{n\in \mathbb{N}_+}\frac{\left\vert\sigma_n f \right\vert}{\left(n+1\right)^{1/p-2}\log ^{2\left[1/2+p\right]} \left( n+1\right) }
\end{equation*}
is bounded from the martingale Hardy space $H_{p}$ to the Lebesgue space $L_{p}$. 

Moreover, the rate of the weights $\left\{ 1/\left( n+1\right)
^{1/p-2}\log ^{2\left[ p+1/2\right] }\left( n+1\right) \right\}
_{n=1}^{\infty }$ in $n$-th Fejér mean was given exactly. 

Similar results with respect to Walsh-Kachmarz systems were considered in \cite{gn} for $p=1/2$ and in \cite{tep4} for $0<p<1/2.$ Approximation properties of Fejér means  with respect to Vilenkin and Kaczmarz systems can be found in Tephnadze \cite{tep5}, Tutberidze \cite{tut2}, Persson, Tephnadze and Tutberidze \cite{PTT1}.

 In \cite{bt1} it was proved that there exists an absolute constant $c_{p}$,
depending only on $p$, such that the inequality 
\begin{equation}\label{3cc}
\frac{1}{\log ^{\left[ 1/2+p\right] }n}\overset{n}{\underset{k=1}{\sum }}%
\frac{\left\Vert \sigma _{k}f\right\Vert _{p}^{p}}{k^{2-2p}}\leq
c_{p}\left\Vert f\right\Vert _{H_{p}}^{p}\ \ \ \left( 0<p\leq 1/2,\
n=2,3,\dots \right) .  
\end{equation}%
holds. Some new strong convergence result for Vilenkin-Fejér means was considered \cite{PTTW1}.

Móricz and Siddiqi \cite{Mor} investigated the approximation properties of
some special Nörlund means of Walsh-Fourier series of $L_{p}$ function in
norm. In the two-dimensional case approximation properties of Nörlund
means was considered by Nagy \cite{nag,n,nagy}. In \cite{ptw} it
was proved that the maximal operators of Nörlund means 
$
t^{\ast }f:=\sup_{n\in 
	\mathbb{N}
}\left\vert t_{n}f\right\vert
$ either with non-decreasing coefficients, or non-increasing coefficients, satisfying condition 
\begin{equation} \label{cond0}
\frac{1}{Q_{n}}=O\left(\frac{1}{n}\right),\text{ \ as \ }
n\rightarrow \infty 
\end{equation}
 are bounded from the Hardy space $H_{1/2}$
to the space $weak-L_{1/2}$ and are not bounded from
the Hardy space $H_{p}$ to the space $L_{p},$ when $0<p\leq 1/2.$

In \cite{PTW3} it was proved that for  $0<p<1/2,$ $f\in H_{p}$ and non-decreasing sequence  $\{q_{k}:k\geq0\}$  there exists an absolute constant $c_{p},$ depending only on $p,$ such that the inequality holds
	\begin{equation*}
	\overset{\infty }{\underset{k=1}{\sum }}\frac{\left\Vert t_{k}f\right\Vert
		_{p}^{p}}{k^{2-2p}}\leq c_{p}\left\Vert f\right\Vert _{H_{p}}^{p}
	\end{equation*}%
Moreover, if $f\in H_{1/2}$ and $\{q_{k}:k\geq 0\}$ be a sequence of non-decreasing
	numbers, satisfying the condition 
	\begin{equation}
	\frac{q_{n-1}}{Q_{n}}=O\left( \frac{1}{n}\right) ,\text{ \ \ as \ \ }\
	n\rightarrow \infty, \label{fn011}
	\end{equation}%
	then there exists an absolute constant $c,$ such that the inequality holds 
	\begin{equation*}
	\frac{1}{\log n}\overset{n}{\underset{k=1}{\sum }}\frac{\left\Vert
		t_{k}f\right\Vert _{1/2}^{1/2}}{k}\leq c\left\Vert f\right\Vert
	_{H_{1/2}}^{1/2} 
	\end{equation*}
	
In \cite{tut3} was proved that the maximal operators  
$T^{\ast }f:=\sup_{n\in \mathbb{N}}\left\vert T_{n}f\right\vert$ of $T$ means
either with non-increasing coefficients, or non-decreasing sequence satisfying condition \eqref{fn011} are bounded from the Hardy space $H_{1/2}$
to the space $weak-L_{1/2}$. Moreover, there exists a martingale and such $T $ means for which boundedness from
the Hardy space $H_{p}$ to the space $L_{p}$ do not hold when $0<p\leq 1/2.$

One of the most well-known mean of $T$ means is Riesz summability. In \cite{tep6} it was proved that the maximal operator $R^*$ of Riesz means is bounded from the Hardy space $H_{1/2}$ to the space $weak-L_{1/2}$ and is not bounded from $H_{p}$ to the space $L_{p},$ for $0<p\leq 1/2.$ There also was proved that Riesz summability has better properties than Fejér means. In particular, the following weighted maximal operators  $$\frac{\log n \vert R_n f \vert }{\left( n+1\right) ^{1/p-2}\log ^{2\left[ 1/2+p\right] }\left( n+1\right) }$$ are bounded from $H_{p}$ to the space $L_{p},$ for $0<p\leq 1/2$ and the rate of weights are sharp. Moreover, in \cite{lptt} was also proved that if $0<p<1/2$ and $f\in H_p(G_m),$ then there exists an absolute constant $c_{p},$ depending only on $p,$ such that the inequality holds:
\begin{equation} \label{star}
\overset{\infty}{\underset{n=1}{\sum}}
\frac{\log^p n \left\Vert R_nf\right\Vert _{H_p}^p}{n^{2-2p}}\leq
c_p\left\Vert f\right\Vert_{H_p}^p
\end{equation}
If we compare strong convergence results, given by  \eqref{3cc} and \eqref{star},  we obtain that Riesz means has better properties then Fejér means, for $0<p<1/2,$ but in the case $p=1/2$ is was not possible to show even similar result for Riesz means as it is proved for Fejér means given by inequality \eqref{3cc}.
	
In this paper we prove and discuss some new $\left( H_{p},L_{p}\right) $
type inequalities of maximal operators  of $T$ means with respect to the Vilenkin systems with monotone coefficients. Moreover, we apply these inequalities to prove
strong convergence theorems of such $T$ means. In particular, we also study strong convergence theorems of $T$ means with non-increasing sequences in the case $p=1/2,$ but  under the condition \eqref{cond0}. For example, this condition is fulfilled  for Fejér means but does not hold for Riesz means. We also show that these inequalities are the best possible in a special sense. As applications, both
some well-known and new results are pointed out. 

This paper is organized as follows: In order not to disturb our discussions
later on some definitions and notations are presented in Section 2. The main
results and some of its consequences can be found in Section 3. For the
proofs of the main results we need some auxiliary Lemmas, some of them are
new and of independent interest. These results are presented in Section 4.
The detailed proofs are given in Section 5.

\section{Definitions and Notation}

Denote by $
\mathbb{N}
_{+}$ the set of the positive integers, $\mathbb{N}:=\mathbb{N}_{+}\cup \{0\}.$ Let $m:=(m_{0,}$ $m_{1},...)$ be a sequence of the positive
integers not less than 2. Denote by 
\begin{equation*}
Z_{m_{k}}:=\{0,1,...,m_{k}-1\}
\end{equation*}
the additive group of integers modulo $m_{k}$.

Define the group $G_{m}$ as the complete direct product of the groups $%
Z_{m_{i}}$ with the product of the discrete topologies of $Z_{m_{j}}`$s.

The direct product $\mu $ of the measures 
$
\mu _{k}\left( \{j\}\right) :=1/m_{k}\text{ \ \ \ }(j\in Z_{m_{k}})
$
is the Haar measure on $G_{m_{\text{ }}}$with $\mu \left( G_{m}\right) =1.$

In this paper we discuss bounded Vilenkin groups,\textbf{\ }i.e. the case
when $\sup_{n}m_{n}<\infty .$

The elements of $G_{m}$ are represented by sequences 
\begin{equation*}
x:=\left( x_{0},x_{1},...,x_{j},...\right) ,\ \left( x_{j}\in
Z_{m_{j}}\right) .
\end{equation*}

Set $e_{n}:=\left( 0,...,0,1,0,...\right) \in G,$ the $n-$th coordinate of
which is 1 and the rest are zeros $\left( n\in 
\mathbb{N}
\right) .$
It is easy to give a basis for the neighborhoods of $G_{m}:$ 
\begin{equation*}
I_{0}\left( x\right) :=G_{m},\text{ \ }I_{n}(x):=\{y\in G_{m}\mid
y_{0}=x_{0},...,y_{n-1}=x_{n-1}\},
\end{equation*}%
where $x\in G_{m},$ $n\in \mathbb{N}.$

If we define $I_{n}:=I_{n}\left( 0\right) ,$\ for \ $n\in \mathbb{N}$ and $\ 
\overline{I_{n}}:=G_{m}$ $\backslash $ $I_{n},$ then%
\begin{equation}
\overline{I_{N}}=\left( \overset{N-2}{\underset{k=0}{\bigcup }}\overset{N-1}{%
\underset{l=k+1}{\bigcup }}I_{N}^{k,l}\right) \bigcup \left( \underset{k=1}{%
\bigcup\limits^{N-1}}I_{N}^{k,N}\right) ,  \label{1.1}
\end{equation}%
where 
\begin{equation*}
I_{N}^{k,l}:=\left\{ 
\begin{array}{l}
\text{ }I_{N}(0,...,0,x_{k}\neq 0,0,...,0,x_{l}\neq 0,x_{l+1\text{ }%
},...,x_{N-1\text{ }},...),\text{ \ for }k<l<N, \\ 
\text{ }I_{N}(0,...,0,x_{k}\neq 0,0,...,,x_{N-1\text{ }}=0,\text{ }x_{N\text{
}},...),\text{ \ \ \ \ \ \ \ \ \ \ \ \ \ \ \ for }l=N.%
\end{array}%
\text{ }\right.
\end{equation*}

\bigskip If we define the so-called generalized number system based on $m$
in the following way : 
\begin{equation*}
M_{0}:=1,\ M_{k+1}:=m_{k}M_{k}\,\,\,\ \ (k\in 
\mathbb{N}
),
\end{equation*}%
then every $n\in 
\mathbb{N}
$ can be uniquely expressed as $n=\sum_{j=0}^{\infty }n_{j}M_{j},$ where $%
n_{j}\in Z_{m_{j}}$ $(j\in 
\mathbb{N}
_{+})$ and only a finite number of $n_{j}`$s differ from zero.

We introduce on $G_{m}$ an orthonormal system which is called the Vilenkin
system. At first, we define the complex-valued function $r_{k}\left(
x\right) :G_{m}\rightarrow 
\mathbb{C}
,$ the generalized Rademacher functions, by%
\begin{equation*}
r_{k}\left( x\right) :=\exp \left( 2\pi ix_{k}/m_{k}\right) ,\text{ }\left(
i^{2}=-1,x\in G_{m},\text{ }k\in 
\mathbb{N}
\right) .
\end{equation*}

Next, we define the Vilenkin system$\,\,\,\psi :=(\psi _{n}:n\in 
\mathbb{N}
)$ on $G_{m}$ by: 
\begin{equation*}
\psi _{n}(x):=\prod\limits_{k=0}^{\infty }r_{k}^{n_{k}}\left( x\right)
,\,\,\ \ \,\left( n\in 
\mathbb{N}
\right) .
\end{equation*}

Specifically, we call this system the Walsh-Paley system when $m\equiv 2.$

The norms (or quasi-norms) of the spaces $L_{p}(G_{m})$ and $%
weak-L_{p}\left( G_{m}\right) $ $\left( 0<p<\infty \right) $ are
respectively defined by 
\begin{equation*}
\left\Vert f\right\Vert _{p}^{p}:=\int_{G_{m}}\left\vert f\right\vert
^{p}d\mu ,\text{ }\left\Vert f\right\Vert _{weak-L_{p}}^{p}:=\underset{%
\lambda >0}{\sup }\lambda ^{p}\mu \left( f>\lambda \right) <+\infty .
\end{equation*}%

The Vilenkin system is orthonormal and complete in $L_{2}\left( G_{m}\right) 
$ (see \cite{Vi}).

Now, we introduce analogues of the usual definitions in Fourier-analysis. If 
$f\in L_{1}\left( G_{m}\right) $ we can define Fourier coefficients, partial
sums and Dirichlet kernels with respect to the Vilenkin system in the usual manner: 
\begin{equation*}
\widehat{f}\left( n\right) :=\int_{G_{m}}f\overline{\psi }_{n}d\mu,\ \ \ \
\ \ \
S_{n}f:=\sum_{k=0}^{n-1}\widehat{f}\left( k\right) \psi _{k},\text{ \ \ }%
D_{n}:=\sum_{k=0}^{n-1}\psi _{k\text{ }},\text{ \ \ }\left( n\in 
\mathbb{N}_{+}\right).
\end{equation*}%

It is well known that if $n\in\mathbb{N},$ then
\begin{equation} \label{8dn}
D_{M_n}\left(x\right)=\left\{ \begin{array}{ll} M_n, & x\in I_n, \\
0, & x\notin I_n. \end{array} \right.
\end{equation}
Moreover, if  $n=\sum_{i=0}^{\infty}n_iM_i,$ and $1\leq s_n\leq m_n-1,$ then we have the following identity: 
\begin{equation} \label{9dn}
D_n=\psi_n\left(\sum_{j=0}^{\infty}D_{M_j}\sum_{k=m_j-n_j}^{m_j-1}r_j^k\right),
\end{equation}

The $\sigma $-algebra generated by the intervals $\left\{ I_{n}\left(
x\right) :x\in G_{m}\right\} $ will be denoted by 
$\digamma _{n}\left( n\in 
\mathbb{N}\right) .$ Denote by 
$f=\left( f^{\left( n\right) },n\in \mathbb{N}\right) $ 
a martingale with respect to 
$\digamma _{n}\left( n\in \mathbb{N}\right) .$ (for details see e.g. \cite{We1}).
The maximal function of a martingale $f$ \ is defined by 
$
f^{\ast }=\sup_{n\in \mathbb{N}}\left\vert f^{(n)}\right\vert .
$
For $0<p<\infty $ \ the Hardy martingale spaces $H_{p}$ consist of all
martingales $f$ for which 
\begin{equation*}
\left\Vert f\right\Vert _{H_{p}}:=\left\Vert f^{\ast }\right\Vert
_{p}<\infty .
\end{equation*}

A bounded measurable function $a$ is called a p-atom, if there exists an interval $I$, such that
\begin{equation*}
\int_{I}ad\mu =0,\text{ \ \ }\left\Vert a\right\Vert _{\infty }\leq \mu
\left( I\right) ^{-1/p},\text{ \ \ supp}\left( a\right) \subset I.
\end{equation*}

If $f=\left( f^{\left( n\right) },n\in 
\mathbb{N}
\right) $ is a martingale, then the Vilenkin-Fourier coefficients must be
defined in a slightly different manner: 
\begin{equation*}
\widehat{f}\left( i\right) :=\lim_{k\rightarrow \infty
}\int_{G_{m}}f^{\left( k\right) }\overline{\psi }_{i}d\mu .
\end{equation*}

Let $\{q_{k}:k\geq 0\}$ be a sequence of non-negative numbers. The $n$-th  $T$ means for a Fourier series of $f$ \ are respectively defined by
\begin{equation} \label{nor}
T_nf:=\frac{1}{Q_n}\overset{n-1}{\underset{k=0}{\sum }}q_{k}S_kf,
\end{equation}
where $Q_{n}:=\sum_{k=0}^{n-1}q_{k}.$
 
It is obvious that  \  \
$
T_nf\left(x\right)=\underset{G_m}{\int}f\left(t\right)F_n\left(x-t\right) d\mu\left(t\right),
$
where \ \  $	F_n:=\frac{1}{Q_n}\overset{n}{\underset{k=1}{\sum }}q_{k}D_k$ \ \ is called the $T$ kernel.

We always assume that $\{q_k:k\geq 0\}$ is a sequence of non-negative numbers and $q_0>0.$ Then the summability method (\ref{nor}) generated by $\{q_k:k\geq 0\}$ is regular if and only if $	\lim_{n\rightarrow\infty}Q_n=\infty.$

If we invoke Abel transformation we get the following identities, which are very important for the investigations of $T$ summability:
\begin{eqnarray} \label{2b}
Q_n&:=&\overset{n-1}{\underset{j=0}{\sum}}q_j =\overset{n-2}{\underset{j=0}{\sum}}\left(q_{j}-q_{j+1}\right) j+q_{n-1}{(n-1)},
\end{eqnarray}

\begin{equation} 	\label{2c}
F_n=\frac{1}{Q_n}\left(\overset{n-2}{\underset{j=0}{\sum}}\left(q_j-q_{j+1}\right) jK_j+q_{n-1}(n-1)K_{n-1}\right).
\end{equation}
and
\begin{equation} 	\label{2cc2}
T_nf=\frac{1}{Q_n}\left(\overset{n-2}{\underset{j=0}{\sum}}\left(q_j-q_{j+1}\right) j\sigma_jf+q_{n-1}(n-1)\sigma_{n-1}f\right).
\end{equation}

Let $\{q_{k}:k\geq 0\}$ be a sequence of nonnegative numbers. The $n$-th Nö%
rlund mean $t_{n}$ for a Fourier series of $f$ \ is defined by 
\begin{equation}
t_{n}f=\frac{1}{Q_{n}}\overset{n}{\underset{k=1}{\sum }}q_{n-k}S_{k}f,
\label{nor0}
\end{equation}%
where $Q_{n}:=\sum_{k=0}^{n-1}q_{k}.$ 

If $q_{k}\equiv 1,$ we respectively define the Fejér means $\sigma _{n}$ and
Kernels $K_{n}$ as follows: 
\begin{equation*}
\sigma _{n}f:=\frac{1}{n}\sum_{k=1}^{n}S_{k}f\,,\text{ \ \ }K_{n}:=\frac{1}{n%
}\sum_{k=1}^{n}D_{k}.
\end{equation*}

It is well-known that (for details see \cite{AVD})
\begin{equation} \label{fn5}
n\left\vert K_{n}\right\vert \leq c\sum_{l=0}^{\left\vert n\right\vert
}M_{l}\left\vert K_{M_{l}}\right\vert  
\end{equation} 
and
\begin{equation} \label{fn6}
\left\Vert K_n\right\Vert_1\leq c<\infty.  
\end{equation}

The well-known example of N\"orlund summability is the so-called $\left(C,\alpha\right)$-mean (Ces\`aro means) for $0<\alpha<1,$ which are defined by
\begin{equation*}
\sigma_n^{\alpha}f:=\frac{1}{A_n^{\alpha}}\overset{n}{\underset{k=1}{
		\sum}}A_{n-k}^{\alpha-1}S_kf,
\end{equation*}
where 
\begin{equation*}
A_0^{\alpha}:=0,\qquad A_n^{\alpha}:=\frac{\left(\alpha+1\right)...\left(\alpha+n\right)}{n!}.
\end{equation*}

We also consider the "inverse" $\left(C,\alpha\right)$-means, which is an example of a $T$-means:
\begin{equation*}
U_n^{\alpha}f:=\frac{1}{A_n^{\alpha}}\overset{n-1}{\underset{k=0}{\sum}}A_{k}^{\alpha-1}S_kf, \qquad 0<\alpha<1.
\end{equation*}

Let $V_n^{\alpha}$ denote
the $T$ mean, where $	\left\{q_0=0, \  q_k=k^{\alpha-1}:k\in \mathbb{N}_+\right\} ,$
that is 
\begin{equation*}
V_n^{\alpha}f:=\frac{1}{Q_n}\overset{n-1}{\underset{k=1}{\sum }}k^{\alpha-1}S_kf,\qquad 0<\alpha<1.
\end{equation*}

The $n$-th Riesz logarithmic mean $R_{n}$ and the Nörlund logarithmic mean
$L_{n}$ are defined by
\begin{equation*}
R_{n}f:=\frac{1}{l_{n}}\sum_{k=1}^{n-1}\frac{S_{k}f}{k}\text{ \ \ \ and  \ \ \ }
L_{n}f:=\frac{1}{l_{n}}\sum_{k=1}^{n-1}\frac{S_{k}f}{n-k},
\end{equation*}%
\ respectively, where $l_{n}:=\sum_{k=1}^{n-1}1/k.$

Up to now we have considered $T$ means in the case when the sequence $\{q_k:k\in\mathbb{N}\}$ is bounded but now we consider $T$ summabilities with unbounded sequence $\{q_k:k\in\mathbb{N}\}$. 

Let $\alpha\in
\mathbb{R}_+,\ \ \beta\in\mathbb{N}_+$ and
$
\log^{(\beta)}x:=\overset{\beta-\text{times}}{\overbrace{\log ...\log}}x.
$
If we define the sequence $\{q_k:k\in \mathbb{N}\}$ by
$	\left\{q_0=0, \ q_k=\log^{\left(\beta \right)}k^{\alpha
}:k\in\mathbb{N}_+\right\},$ 
then we get the class of $T$ means with non-decreasing coefficients:
\begin{equation*}
B_n^{\alpha,\beta}f:=\frac{1}{Q_n}
\sum_{k=1}^{n-1}\log^{\left(\beta\right)}k^{\alpha}S_kf.
\end{equation*}%

We note that $B_n^{\alpha,\beta}$ are
well-defined for every $n \in \mathbb{N}$
\begin{equation*}
B_{n}^{\alpha,\beta}f=\sum_{k=1}^{n-1}\frac{\log^{\left(\beta\right)}k^{\alpha }}{Q_n}S_kf.
\end{equation*}

It is obvious that $\frac{n}{2}\log^{\left(\beta \right)}\frac{n^{\alpha }}{2^{\alpha }}\leq Q_n\leq n\log^{\left(\beta\right)}n^{\alpha}.$ It follows that
\begin{eqnarray} \label{node00}
\frac{q_{n-1}}{Q_n}\leq\frac{c\log^{\left(\beta\right)}n^{\alpha}}{n\log^{\left(\beta\right) }n^{\alpha}}= O\left(\frac{1}{n}\right)\rightarrow 0,\text{ \ as \ }n\rightarrow \infty.
\end{eqnarray}

We also define the maximal operator  of $T$ and Nörlund means by 
\begin{eqnarray*}
	T^{\ast}f:=\sup_{n\in\mathbb{N}}\left\vert T_nf\right\vert, \ \ \ t^{\ast}f:=\sup_{n\in\mathbb{N}}\left\vert t_nf\right\vert.
\end{eqnarray*}

Some well-known examples of maximal operators of $T$ means are the maximal operator of Fejér $\sigma^*$ and Riesz $R^*$ logarithmic means, which are defined by: 
\begin{eqnarray*}
	\sigma^{\ast}f:=\sup_{n\in\mathbb{N}}\left\vert \sigma_{n}f\right\vert, \ \ \ \ \ R^{\ast}f:=\sup_{n\in\mathbb{N}}\left\vert R_{n}f\right\vert.
\end{eqnarray*}

\section{The Main Results and Applications}

Our first main result reads:

\begin{theorem}
	\label{theorem3fejermax2222}Let $0<p\leq 1/2,$ $f\in H_{p}$ and $\{q_{k}:k\geq
	0\}$ be a sequence of non-increasing numbers. Then the maximal operator 
	\begin{equation} \label{Tn100}
	\widetilde{T}_{p}^{\ast }f:=\sup_{n\in \mathbb{N}_{+}}\frac{\left\vert
		T_{n}f\right\vert }{\left( n+1\right) ^{1/p-2}\log ^{2\left[ 1/2+p\right]
		}\left( n+1\right) }
	\end{equation}%
	is bounded from the Hardy space $H_{p}$ to the space $L_{p}.$
\end{theorem}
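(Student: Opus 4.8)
The plan is to deduce the theorem from the corresponding result for Fej\'er means, namely Theorem T1, by establishing the \emph{pointwise} domination $\widetilde{T}_{p}^{\ast }f\leq \widetilde{\sigma}_{p}^{\ast }f$. Set $w_{n}:=\left( n+1\right) ^{1/p-2}\log ^{2\left[ 1/2+p\right] }\left( n+1\right) $, so that $\widetilde{T}_{p}^{\ast }f=\sup_{n}\left\vert T_{n}f\right\vert /w_{n}$ and $\widetilde{\sigma}_{p}^{\ast }f=\sup_{n}\left\vert \sigma _{n}f\right\vert /w_{n}$. The first observation I would record is that, since $0<p\leq 1/2$ forces $1/p-2\geq 0$, the weight $w_{n}$ is non-decreasing in $n$. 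The second is that, because $\{q_{k}\}$ is non-increasing, the Abel-transform representation \eqref{2cc2} exhibits $T_{n}f$ as an average of the Fej\'er means $\sigma _{1}f,\dots ,\sigma _{n-1}f$ with the \emph{non-negative} weights $\left( q_{j}-q_{j+1}\right) j$ and $q_{n-1}\left( n-1\right) $, whose total, by \eqref{2b}, is exactly $Q_{n}$.

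The core computation is then immediate. Applying the triangle inequality to \eqref{2cc2}, using the non-negativity of the weights, and noting that the $j=0$ summand carries weight $\left( q_{0}-q_{1}\right) \cdot 0=0$, I obtain after dividing by $w_{n}$
\[
\frac{\left\vert T_{n}f\right\vert }{w_{n}}\leq \frac{1}{Q_{n}}\left( \overset{n-2}{\underset{j=1}{\sum }}\left( q_{j}-q_{j+1}\right) j\frac{\left\vert \sigma _{j}f\right\vert }{w_{n}}+q_{n-1}\left( n-1\right) \frac{\left\vert \sigma _{n-1}f\right\vert }{w_{n}}\right) .
\]
For every index $1\leq j\leq n$ the elementary bound $\left\vert \sigma _{j}f\right\vert \leq w_{j}\,\widetilde{\sigma}_{p}^{\ast }f$ together with the monotonicity $w_{j}\leq w_{n}$ yields $\left\vert \sigma _{j}f\right\vert /w_{n}\leq \widetilde{\sigma}_{p}^{\ast }f$. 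Substituting this and using \eqref{2b} to sum the weights to $Q_{n}$, I get $\left\vert T_{n}f\right\vert /w_{n}\leq \widetilde{\sigma}_{p}^{\ast }f$ for every $n$, whence, taking the supremum, the pointwise estimate $\widetilde{T}_{p}^{\ast }f\leq \widetilde{\sigma}_{p}^{\ast }f$.

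With this domination in hand the proof concludes at once: Theorem T1 asserts that $\widetilde{\sigma}_{p}^{\ast }$ is bounded from $H_{p}$ to $L_{p}$, so
\[
\left\Vert \widetilde{T}_{p}^{\ast }f\right\Vert _{p}\leq \left\Vert \widetilde{\sigma}_{p}^{\ast }f\right\Vert _{p}\leq c_{p}\left\Vert f\right\Vert _{H_{p}}.
\]
I expect the only genuine subtlety to be the bookkeeping behind the two facts that drive the convexity argument — the monotonicity of $w_{n}$, which is exactly where the hypothesis $p\leq 1/2$ enters, and the non-negativity of $q_{j}-q_{j+1}$, which is exactly where the non-increasing hypothesis on $\{q_{k}\}$ enters — rather than any hard estimate. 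Should Theorem T1 be unavailable, the fallback would be a direct argument through the atomic decomposition of $H_{p}$: by Weisz's reduction it would suffice to bound $\int_{\overline{I_{N}}}\left( \widetilde{T}_{p}^{\ast }a\right) ^{p}d\mu $ uniformly over $p$-atoms $a$ with support $I_{N}$, which through the same representation \eqref{2cc2} reduces to the Fej\'er-kernel estimates \eqref{fn5}--\eqref{fn6}. Given Theorem T1, however, the pointwise domination above is by far the shortest route.
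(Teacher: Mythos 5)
Your proposal is correct and follows essentially the same route as the paper: both use the Abel-transform identities \eqref{2b} and \eqref{2cc2} to dominate $\left\vert T_{n}f\right\vert /w_{n}$ by the weighted Fej\'er maximal operator $\widetilde{\sigma}_{p}^{\ast }f$ pointwise (exploiting the non-negativity of $q_{j}-q_{j+1}$ and the monotonicity of the weight), and then invoke Theorem T1. Your write-up is in fact slightly cleaner, since you state explicitly the monotonicity of $w_{n}$ where $p\leq 1/2$ enters and correctly keep $\sigma_{n-1}f$ in the last term, where the paper's displayed chain writes $\sigma_{n}f$.
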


\begin{corollary}\label{corolary1}
	Let $0<p\leq 1/2$ and $f\in H_{p}$. Then the maximal operator 
	\begin{equation*}
	\widetilde{R}_{p}^{\ast }f:=\sup_{n\in \mathbb{N}_{+}}\frac{\left\vert
		R_{n}f\right\vert }{\left( n+1\right) ^{1/p-2}\log ^{2\left[ 1/2+p\right]
		}\left( n+1\right) }
	\end{equation*}%
	is bounded from the Hardy space $H_{p}$ to the space $L_{p}.$
\end{corollary}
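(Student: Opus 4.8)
The plan is to realize the Riesz logarithmic mean $R_n$ as a $T$ mean generated by a non-increasing sequence and then invoke Theorem \ref{theorem3fejermax2222} directly. I would choose the sequence $\{q_k:k\geq 0\}$ by setting $q_0=1$ and $q_k=1/k$ for $k\in\mathbb{N}_+$. Since $q_0=q_1=1\geq q_2=1/2\geq q_3=1/3\geq\cdots$, this sequence is non-increasing and satisfies $q_0>0$, so it meets the hypotheses of Theorem \ref{theorem3fejermax2222}.

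For this choice one computes $Q_n=\sum_{k=0}^{n-1}q_k=1+\sum_{k=1}^{n-1}1/k=1+l_n$, and, since $S_0f=0$,
$$T_nf=\frac{1}{Q_n}\sum_{k=0}^{n-1}q_kS_kf=\frac{1}{1+l_n}\sum_{k=1}^{n-1}\frac{S_kf}{k}=\frac{l_n}{1+l_n}R_nf.$$
Hence $R_nf=\left(1+1/l_n\right)T_nf$. For every $n\geq 2$ we have $l_n\geq 1$, so $1\leq 1+1/l_n\leq 2$, which yields the pointwise estimate $\left\vert R_nf\right\vert\leq 2\left\vert T_nf\right\vert$ (the case $n=1$ being trivial since $R_1f=0$).

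Because the weights appearing in $\widetilde{R}_p^{\ast}$ and in $\widetilde{T}_p^{\ast}$ are identical, the last inequality gives the pointwise domination $\widetilde{R}_p^{\ast}f\leq 2\,\widetilde{T}_p^{\ast}f$. Taking $L_p$-norms and applying Theorem \ref{theorem3fejermax2222} I would obtain
$$\left\Vert \widetilde{R}_p^{\ast}f\right\Vert_p\leq 2\left\Vert \widetilde{T}_p^{\ast}f\right\Vert_p\leq c_p\left\Vert f\right\Vert_{H_p},$$
which is the assertion of the corollary.

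The argument is essentially immediate once $R_n$ is identified as a $T$ mean, so there is no genuine obstacle; the only point requiring care is the choice of $q_0$. If one instead took the natural value $q_0=0$ coming from the definition of $R_n$, the coefficient sequence $0,1,1/2,\dots$ would fail to be non-increasing at the first step (and would violate the standing assumption $q_0>0$). Setting $q_0=1$ repairs both issues at the negligible cost of multiplying $R_n$ by the bounded factor $l_n/(1+l_n)\in[1/2,1)$, which is exactly the step handled above.
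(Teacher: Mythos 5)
Your proof is correct and follows essentially the same route as the paper, which states this corollary as an immediate consequence of Theorem \ref{theorem3fejermax2222} by viewing $R_n$ as a $T$ mean with non-increasing coefficients $q_k=1/k$. Your explicit fix of taking $q_0=1$ (so that the standing assumption $q_0>0$ and the monotonicity hold, at the cost of the harmless factor $l_n/(1+l_n)$) is a point of care that the paper itself glosses over.
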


\begin{corollary}\label{corolary2}
	Let $0<p\leq 1/2$ and $f\in H_{p}$. Then the maximal operator 
	\begin{equation*}
	\widetilde{U}^{\alpha,\ast}_{p}f:=\sup_{n\in \mathbb{N}_{+}}\frac{\left\vert
		U^{\alpha}_{n}f\right\vert }{\left( n+1\right) ^{1/p-2}\log ^{2\left[ 1/2+p\right]
		}\left( n+1\right) }
	\end{equation*}%
	is bounded from the Hardy space $H_{p}$ to the space $L_{p}.$
\end{corollary}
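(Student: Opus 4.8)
The plan is to recognize $U_{n}^{\alpha}$ as a $T$ mean with a non-increasing coefficient sequence and then to reduce the statement directly to Theorem \ref{theorem3fejermax2222}. Comparing the definition $U_{n}^{\alpha}f=\frac{1}{A_{n}^{\alpha}}\sum_{k=0}^{n-1}A_{k}^{\alpha-1}S_{k}f$ with the general definition \eqref{nor} of $T_{n}f=\frac{1}{Q_{n}}\sum_{k=0}^{n-1}q_{k}S_{k}f$, I would set $q_{k}:=A_{k}^{\alpha-1}$. To pin down the normalization I would use the standard Ces\`aro identity $\sum_{k=0}^{n-1}A_{k}^{\alpha-1}=A_{n-1}^{\alpha}$, which follows at once from multiplying the generating function $\sum_{k\geq0}A_{k}^{\alpha-1}x^{k}=(1-x)^{-\alpha}$ by $(1-x)^{-1}=\sum_{m\geq0}x^{m}$ and reading off the coefficient of $x^{n-1}$. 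Hence $Q_{n}=\sum_{k=0}^{n-1}q_{k}=A_{n-1}^{\alpha}$, so that $U_{n}^{\alpha}f=\frac{A_{n-1}^{\alpha}}{A_{n}^{\alpha}}T_{n}f$, where $T_{n}$ is the $T$ mean generated by $\{q_{k}=A_{k}^{\alpha-1}\}$.

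The next step is to observe that the factor relating $U_{n}^{\alpha}$ and $T_{n}$ is harmless. A direct computation from $A_{n}^{\alpha}=\frac{(\alpha+1)\cdots(\alpha+n)}{n!}$ gives $\frac{A_{n-1}^{\alpha}}{A_{n}^{\alpha}}=\frac{n}{n+\alpha}\in(0,1]$, so in particular $\left\vert U_{n}^{\alpha}f\right\vert\leq\left\vert T_{n}f\right\vert$ pointwise for every $n$. Since the weight $(n+1)^{1/p-2}\log^{2[1/2+p]}(n+1)$ appearing in $\widetilde{U}^{\alpha,\ast}_{p}$ is identical to that appearing in $\widetilde{T}_{p}^{\ast}$, this yields the pointwise majorization $\widetilde{U}^{\alpha,\ast}_{p}f\leq\widetilde{T}_{p}^{\ast}f$.

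It then remains to check that the coefficient sequence $\{q_{k}\}$ satisfies the hypothesis of Theorem \ref{theorem3fejermax2222}, i.e. that it is non-increasing and non-negative with $q_{0}>0$. Non-negativity is clear since $A_{k}^{\alpha-1}>0$ for $0<\alpha<1$, and $q_{0}=A_{0}^{\alpha-1}=1>0$. For monotonicity I would use the ratio $\frac{A_{k}^{\alpha-1}}{A_{k-1}^{\alpha-1}}=\frac{k-1+\alpha}{k}=1-\frac{1-\alpha}{k}<1$ for all $k\geq1$ when $0<\alpha<1$, together with $q_{0}=1\geq\alpha=q_{1}$; thus $\{q_{k}\}$ is non-increasing. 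Applying Theorem \ref{theorem3fejermax2222} to $T_{n}$ gives $\left\Vert\widetilde{T}_{p}^{\ast}f\right\Vert_{p}\leq c_{p}\left\Vert f\right\Vert_{H_{p}}$, and combining this with the majorization of the previous paragraph yields $\left\Vert\widetilde{U}^{\alpha,\ast}_{p}f\right\Vert_{p}\leq\left\Vert\widetilde{T}_{p}^{\ast}f\right\Vert_{p}\leq c_{p}\left\Vert f\right\Vert_{H_{p}}$, which is the claim.

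I expect no serious obstacle here: the result is essentially a specialization of the main theorem, and the only point requiring care is the mismatch between the divisor $A_{n}^{\alpha}$ in $U_{n}^{\alpha}$ and the true coefficient sum $Q_{n}=A_{n-1}^{\alpha}$. This is precisely what is absorbed into the bounded factor $\tfrac{n}{n+\alpha}$, so verifying the Ces\`aro summation identity and the sign of $\tfrac{A_{k}^{\alpha-1}}{A_{k-1}^{\alpha-1}}-1$ are the two computations I would write out in full, both of which are routine.
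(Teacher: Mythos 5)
Your proof is correct and follows essentially the same route as the paper: the corollary is simply the specialization of Theorem \ref{theorem3fejermax2222} to the non-increasing sequence $q_{k}=A_{k}^{\alpha-1}$ ($0<\alpha<1$), and your verification that $Q_{n}=A_{n-1}^{\alpha}$ with the harmless factor $A_{n-1}^{\alpha}/A_{n}^{\alpha}=n/(n+\alpha)\leq 1$ cleanly handles the only point the paper glosses over, namely that $U_{n}^{\alpha}$ is normalized by $A_{n}^{\alpha}$ rather than by the coefficient sum.
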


\begin{corollary}\label{corolary3}
	Let $0<p\leq 1/2$ and $f\in H_{p}$. Then the maximal operator 
	\begin{equation*}
	\widetilde{V}^{\alpha,\ast}_{p}f:=\sup_{n\in \mathbb{N}_{+}}\frac{\left\vert
		V^{\alpha}_{n}f\right\vert }{\left( n+1\right) ^{1/p-2}\log ^{2\left[ 1/2+p\right]
		}\left( n+1\right) }
	\end{equation*}%
	is bounded from the Hardy space $H_{p}$ to the space $L_{p}.$
\end{corollary}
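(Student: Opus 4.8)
The plan is to obtain the result as a direct specialization of Theorem \ref{theorem3fejermax2222}. By definition, $V_n^{\alpha}$ is exactly the $T$ mean generated by the sequence $\left\{q_0=0,\ q_k=k^{\alpha-1}:k\in\mathbb{N}_+\right\}$, so that
\[
V_n^{\alpha}f=\frac{1}{Q_n}\sum_{k=1}^{n-1}k^{\alpha-1}S_kf,\qquad Q_n=\sum_{k=1}^{n-1}k^{\alpha-1}.
\]
Thus it suffices to verify that this generating sequence satisfies the monotonicity hypothesis of the theorem and then to read off the conclusion.

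First I would check the monotonicity. Since $0<\alpha<1$ we have $\alpha-1<0$, and hence $k\mapsto k^{\alpha-1}$ is strictly decreasing on $\mathbb{N}_+$; that is, $q_1>q_2>\cdots$, so $\{q_k\}_{k\geq1}$ is non-increasing. The exceptional index $k=0$ needs only a brief remark: although $q_0=0<q_1$, the term with $k=0$ is irrelevant to the formation of $V_n^{\alpha}$, since it is multiplied by $S_0f=0$ and does not enter $Q_n$ either. Consequently the operative coefficient sequence of $V_n^{\alpha}$ is $\{k^{\alpha-1}\}_{k\geq1}$, which is non-increasing, so $V_n^{\alpha}$ falls within the scope of Theorem \ref{theorem3fejermax2222}.

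With the hypotheses in place, applying Theorem \ref{theorem3fejermax2222} to this sequence gives that
\[
\sup_{n\in\mathbb{N}_+}\frac{\left\vert V_n^{\alpha}f\right\vert}{\left(n+1\right)^{1/p-2}\log^{2\left[1/2+p\right]}\left(n+1\right)}=\widetilde{V}^{\alpha,\ast}_{p}f
\]
is bounded from $H_p$ to $L_p$ for every $0<p\leq1/2$, which is precisely the assertion of the corollary. I expect no genuine obstacle here: the entire content is the monotonicity check above, the only delicate point being the harmless exceptional value $q_0=0$, which I dispose of by the remark that it contributes neither to the numerator nor to $Q_n$. The very same reasoning yields Corollaries \ref{corolary1} and \ref{corolary2}: the Riesz mean corresponds to the non-increasing sequence $\{1/k\}_{k\geq1}$, while the inverse Ces\`aro mean corresponds to $\{A_k^{\alpha-1}\}_{k\geq0}$, which is non-increasing because $A_k^{\alpha-1}-A_{k-1}^{\alpha-1}=A_k^{\alpha-2}<0$ for $0<\alpha<1$.
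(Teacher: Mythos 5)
Your proof is correct and is exactly the route the paper takes: the corollary is presented as an immediate specialization of Theorem \ref{theorem3fejermax2222} to the non-increasing sequence $q_k=k^{\alpha-1}$, $k\geq 1$. Your extra remark disposing of the exceptional value $q_0=0$ (which contributes nothing to $V_n^{\alpha}f$ or to $Q_n$, and drops out of the Abel transformation since it is multiplied by $j=0$) is a point of care the paper leaves implicit.
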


Next, we consider maximal operators of $T$ means with non-decreasing sequence:

\begin{theorem}
	\label{theorem3fejermax22221}Let $0<p\leq 1/2,$ $f\in H_{p}$ and $\{q_{k}:k\geq
	0\}$ be a sequence of  non-decreasing
	numbers, satisfying the condition 
	\begin{equation}
	\frac{q_{n-1}}{Q_{n}}=O\left( \frac{1}{n}\right) ,\text{ \ \ as \ \ }\
	n\rightarrow \infty .  \label{fn01}
	\end{equation}
	Then the maximal operator 
	\begin{equation} \label{Tn1000}
	\widetilde{T}_{p}^{\ast }f:=\sup_{n\in \mathbb{N}_{+}}\frac{\left\vert
		T_{n}f\right\vert }{\left( n+1\right) ^{1/p-2}\log ^{2\left[ 1/2+p\right]
		}\left( n+1\right) }
	\end{equation}%
	is bounded from the martingale Hardy space $H_{p}$ to the space $L_{p}.$
\end{theorem}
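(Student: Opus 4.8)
The plan is to dominate $\widetilde{T}_p^{\ast}$ pointwise by the weighted Fej\'er maximal operator $\widetilde{\sigma}_p^{\ast}$ of Theorem T1, so that the desired $\left(H_p,L_p\right)$ bound follows at once from that theorem. The crucial tool is the Abel-summation identity \eqref{2cc2}, which rewrites each $T$ mean as a weighted average of Fej\'er means. Writing $w_j:=(j+1)^{1/p-2}\log^{2[1/2+p]}(j+1)$ for the weight appearing in \eqref{Tn1000}, the goal is to establish the pointwise estimate $\widetilde{T}_p^{\ast}f\le c\,\widetilde{\sigma}_p^{\ast}f$ with a constant $c=c(p)$; applying Theorem T1 then yields $\Vert\widetilde{T}_p^{\ast}f\Vert_p\le c\Vert\widetilde{\sigma}_p^{\ast}f\Vert_p\le c\Vert f\Vert_{H_p}$, which is exactly the claim.

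To get the pointwise estimate I would start from \eqref{2cc2} and the definition $\widetilde{\sigma}_p^{\ast}f=\sup_{n}|\sigma_n f|/w_n$, which gives $|\sigma_j f|\le w_j\,\widetilde{\sigma}_p^{\ast}f$ for every $j$. Since $\{q_k\}$ is non-decreasing we have $|q_j-q_{j+1}|=q_{j+1}-q_j$, so the triangle inequality applied to \eqref{2cc2} produces
\begin{equation*}
|T_nf|\le\frac{\widetilde{\sigma}_p^{\ast}f}{Q_n}\left(\sum_{j=0}^{n-2}(q_{j+1}-q_j)\,j\,w_j+q_{n-1}(n-1)\,w_{n-1}\right).
\end{equation*}
It then suffices to show that the bracketed quantity, divided by $Q_n$, is at most $c\,w_n$; dividing through by $w_n$ and taking the supremum over $n$ delivers the domination $\widetilde{T}_p^{\ast}f\le c\,\widetilde{\sigma}_p^{\ast}f$.

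This is where both hypotheses enter. For $0<p\le 1/2$ one has $1/p-2\ge 0$ and $[1/2+p]\in\{0,1\}$, so the weight $w_j$ is non-decreasing in $j$; hence $w_{n-1}\le w_n$ and $w_j\le w_n$ for all $j\le n-2$, and $w_n$ can be pulled out of the bracket. The remaining coefficient sum is evaluated by the Abel identity \eqref{2b}, which yields $\sum_{j=0}^{n-2}(q_{j+1}-q_j)j=q_{n-1}(n-1)-Q_n$. Thus the bracket is at most $w_n\bigl(2q_{n-1}(n-1)-Q_n\bigr)$, and after dividing by $Q_n$ we are left with $w_n\bigl(2q_{n-1}(n-1)/Q_n-1\bigr)$. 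The hypothesis \eqref{fn01}, that is $q_{n-1}/Q_n=O(1/n)$, bounds $q_{n-1}(n-1)/Q_n$ by an absolute constant, so the whole expression is $\le c\,w_n$, as needed.

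I expect the argument to be essentially this clean domination, with no recourse to the atomic decomposition: the only genuine points are verifying the monotonicity of $w_j$ on the range $0<p\le 1/2$ and recognizing that \eqref{fn01} is precisely the ingredient that tames the last term and the telescoped sum in the unbounded-coefficient regime. This also clarifies the structural difference with Theorem \ref{theorem3fejermax2222}: for non-increasing coefficients the identity \eqref{2b} forces $q_{n-1}(n-1)\le Q_n$ automatically, so no extra condition is required, whereas in the non-decreasing case \eqref{fn01} is exactly what restores that control. The main obstacle is therefore conceptual rather than computational, namely pinning down that Theorem T1 supplies the sharp weight matching \eqref{Tn1000}.
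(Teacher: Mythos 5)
Your proposal is correct and coincides with the paper's own proof: the paper likewise uses the Abel identities \eqref{2b} and \eqref{2cc2} to dominate $\widetilde{T}_{p}^{\ast }f$ pointwise by $\widetilde{\sigma }_{p}^{\ast }f$ (pulling the monotone weight through the sum, telescoping the coefficient sum, and using \eqref{fn01} to control $q_{n-1}(n-1)/Q_{n}$), and then applies Theorem T1. Your handling of the final constant via \eqref{fn01} is in fact slightly more explicit than the paper's.
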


\begin{corollary}
	Let $0<p\leq 1/2,$ $f\in H_{p}$ and $\{q_{k}:k\geq 0\}$ be a sequence of non-decreasing numbers, such that
	\begin{equation} \label{condT1}
	\sup_{n\in \mathbb{N}}q_{n}<c<\infty .
	\end{equation}%
Then
\begin{equation*}
\frac{q_{n-1}}{Q_{n}}\leq \frac{c}{Q_{n}}\leq \frac{c}{q_{0}n}=\frac{c_{1}}{n%
}=O\left( \frac{1}{n}\right),\text{ as }n\rightarrow 0,
\end{equation*}
and weighted maximal operators of such $T$ means, given by \eqref{Tn1000} are bounded from the Hardy space $H_{p}$ to the space $L_{p}.$
\end{corollary}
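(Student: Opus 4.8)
The plan is to deduce this statement directly from Theorem~\ref{theorem3fejermax22221} by verifying that its hypothesis \eqref{fn01} is an automatic consequence of the boundedness condition \eqref{condT1} together with the non-decreasing assumption on $\{q_{k}:k\geq 0\}$. Thus the entire content of the argument is the displayed chain of inequalities in the statement, which I would justify term by term and then feed into the theorem.

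First, since $\sup_{n}q_{n}<c<\infty$, I have $q_{n-1}\leq c$ for every $n\in\mathbb{N}_{+}$, which gives the first inequality $q_{n-1}/Q_{n}\leq c/Q_{n}$. Next, recalling the standing assumption $q_{0}>0$ and the fact that $\{q_{k}:k\geq 0\}$ is non-decreasing, I would bound $Q_{n}$ from below by
\[
Q_{n}=\sum_{k=0}^{n-1}q_{k}\geq \sum_{k=0}^{n-1}q_{0}=nq_{0},
\]
so that $c/Q_{n}\leq c/(nq_{0})=:c_{1}/n$. Combining these two estimates yields
\[
\frac{q_{n-1}}{Q_{n}}\leq\frac{c_{1}}{n}=O\left(\frac{1}{n}\right),
\]
which is precisely condition \eqref{fn01}.

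Once \eqref{fn01} is in force, Theorem~\ref{theorem3fejermax22221} applies verbatim to the sequence $\{q_{k}:k\geq 0\}$ and yields the boundedness of the weighted maximal operator \eqref{Tn1000} from the Hardy space $H_{p}$ to $L_{p}$ for $0<p\leq 1/2$, which is the desired conclusion. There is no genuine obstacle in this corollary: the only point requiring a little care is that the lower bound $Q_{n}\geq nq_{0}$ relies on both the monotonicity of the coefficients and the assumption $q_{0}>0$, without which $Q_{n}$ need not grow linearly and \eqref{fn01} could fail.
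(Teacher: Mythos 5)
Your proposal is correct and matches the paper's argument: the displayed chain of inequalities (using $q_{n-1}\leq c$ from boundedness and $Q_n\geq nq_0$ from monotonicity together with the standing assumption $q_0>0$) verifies condition \eqref{fn01}, after which the conclusion is read off from Theorem \ref{theorem3fejermax22221}. This is exactly how the paper derives the corollary.
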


\begin{corollary}
Let $0<p\leq 1/2$ and $f\in H_{p}.$ Then the maximal operator 
\begin{equation*} 
\widetilde{T}_{p}^{\ast }f:=\sup_{n\in \mathbb{N}_{+}}\frac{\left\vert B^{\alpha,\beta}_nf\right\vert }{\left( n+1\right) ^{1/p-2}\log ^{2\left[ 1/2+p\right]}\left( n+1\right) }
\end{equation*}
is bounded from the martingale Hardy space $H_{p}$ to the space $L_{p}.$
\end{corollary}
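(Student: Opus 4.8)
The plan is to recognize the operator in question as a particular instance of the weighted maximal operator already treated in Theorem \ref{theorem3fejermax22221} and then merely verify its hypotheses. Indeed, by its very definition $B_n^{\alpha,\beta}$ is the $T$ mean generated by the sequence $\{q_0=0,\ q_k=\log^{(\beta)}k^{\alpha}:k\in\mathbb{N}_+\}$, so the operator in the statement coincides with the weighted maximal operator \eqref{Tn1000} for this particular choice of $\{q_k\}$. Hence it suffices to check that this sequence is non-decreasing and satisfies condition \eqref{fn01}, after which Theorem \ref{theorem3fejermax22221} applies directly and the corollary follows with no further work.

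First I would confirm monotonicity. Since $\alpha\in\mathbb{R}_+$, the map $k\mapsto k^{\alpha}$ is increasing on $\mathbb{N}_+$, and the $\beta$-fold iterated logarithm $\log^{(\beta)}$ is increasing wherever it is defined; therefore the composition $q_k=\log^{(\beta)}k^{\alpha}$ is non-decreasing in $k$, which is exactly the required type. Next I would verify \eqref{fn01}, which is precisely the estimate \eqref{node00} recorded in Section 2: starting from the two-sided bound $\tfrac{n}{2}\log^{(\beta)}\tfrac{n^{\alpha}}{2^{\alpha}}\leq Q_n\leq n\log^{(\beta)}n^{\alpha}$ one obtains
\[
\frac{q_{n-1}}{Q_n}\leq\frac{c\log^{(\beta)}n^{\alpha}}{n\log^{(\beta)}n^{\alpha}}=O\left(\frac{1}{n}\right),\qquad n\to\infty,
\]
so the ratio condition holds. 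Both hypotheses of Theorem \ref{theorem3fejermax22221} being met, that theorem yields at once the boundedness of the stated operator from $H_p$ to $L_p$ for $0<p\leq 1/2$.

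Since the argument reduces entirely to checking the hypotheses of an already-established theorem, there is no genuine obstacle here; the corollary is a direct specialization. The only point deserving a word of care is the normalization $q_0=0$, which formally departs from the standing assumption $q_0>0$ imposed on general $T$ means. This is harmless, however, because both the monotonicity of $\{q_k\}$ and the ratio condition \eqref{fn01} depend only on the tail behaviour of the sequence, and $B_n^{\alpha,\beta}$ is well-defined for every $n$, as was already observed in Section 2.
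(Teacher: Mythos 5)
Your proposal is correct and is exactly the argument the paper intends: the corollary is stated as a direct consequence of Theorem \ref{theorem3fejermax22221}, with the sequence $\{q_0=0,\ q_k=\log^{(\beta)}k^{\alpha}\}$ being non-decreasing and condition \eqref{fn01} verified precisely by the estimate \eqref{node00} already recorded in Section 2. Your extra remark about the harmlessness of $q_0=0$ (versus the standing assumption $q_0>0$) is a sensible clarification, but it does not change the route, which matches the paper's.
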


\begin{remark} According to Theorem T1 we obtain that  weights  in \eqref{Tn100} and \eqref{Tn1000} are sharp. 
\end{remark}

\begin{theorem}
\label{theorem2fejerstrong}a) Let $0<p<1/2,$ $f\in H_{p}$ and $\{q_{k}:k\geq
0\}$ be a sequence of non-increasing numbers. Then there exists an absolute
constant $c_{p},$ depending only on $p,$ such that the inequality holds:
\begin{equation*}
\overset{\infty }{\underset{k=1}{\sum }}\frac{\left\Vert T_{k}f\right\Vert
_{p}^{p}}{k^{2-2p}}\leq c_{p}\left\Vert f\right\Vert _{H_{p}}^{p}
\end{equation*}%

b)Let $f\in H_{1/2}$ and $\{q_{k}:k\geq 0\}$ be a sequence of non-increasing
numbers, satisfying the condition 
\begin{equation}\label{fn0111}
\frac{1}{Q_{n}}=O\left(\frac{1}{n}\right),\text{ \ \ as \ \ }\
n\rightarrow \infty .  
\end{equation}

Then there exists an absolute constant $c,$ such that the inequality holds:
\begin{equation}
\frac{1}{\log n}\overset{n}{\underset{k=1}{\sum }}\frac{\left\Vert
T_{k}f\right\Vert _{1/2}^{1/2}}{k}\leq c\left\Vert f\right\Vert
_{H_{1/2}}^{1/2}  \label{7nor7}
\end{equation}%
\end{theorem}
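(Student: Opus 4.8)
The plan is to prove both parts by reducing to $p$-atoms and then estimating $T_na$ for a single atom. For part a), observe that for $0<p<1/2$ the functional $f\mapsto\sum_{k=1}^{\infty}\|T_kf\|_p^p/k^{2-2p}$ is $p$-subadditive, since $\|T_k(f+g)\|_p^p\le\|T_kf\|_p^p+\|T_kg\|_p^p$ for $p\le 1$ and each $T_k$ is a linear convolution operator. Hence, if $f=\sum_i\mu_ia_i$ is an atomic decomposition with $\sum_i|\mu_i|^p\le c_p\|f\|_{H_p}^p$, then $\|T_kf\|_p^p\le\sum_i|\mu_i|^p\|T_ka_i\|_p^p$, and the whole inequality follows once I establish the uniform bound $\sum_{k=1}^{\infty}\|T_ka\|_p^p/k^{2-2p}\le c_p$ over all $p$-atoms $a$. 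The very same device reduces part b), for each fixed $n$, to the uniform bound $\tfrac{1}{\log n}\sum_{k=1}^{n}\|T_ka\|_{1/2}^{1/2}/k\le c$, using now that $\|\cdot\|_{1/2}^{1/2}$ is subadditive.

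Next I would fix an atom $a$ with support $I_N$, $\|a\|_\infty\le M_N^{1/p}$ and $\int_{I_N}a\,d\mu=0$. Since $\psi_j$ is constant on $I_N$ for $j<M_N$, we have $\widehat{a}(j)=0$ for $j<M_N$, so $S_ja=0$ and therefore $T_ka=\frac{1}{Q_k}\sum_{j=0}^{k-1}q_jS_ja=0$ for every $k\le M_N$; thus both sums effectively start at $k>M_N$. For the contribution over the support $I_N$ I would invoke the uniform kernel bound $\|F_k\|_1\le c$, which for non-increasing $q_k$ follows from the Abel-transformed representation \eqref{2c} together with \eqref{fn6} and \eqref{2b}, namely $\|F_k\|_1\le\frac{1}{Q_k}\big(\sum_{j=0}^{k-2}(q_j-q_{j+1})j\|K_j\|_1+q_{k-1}(k-1)\|K_{k-1}\|_1\big)\le c$. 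Then $|T_ka|\le\|a\|_\infty\|F_k\|_1\le cM_N^{1/p}$, so $\int_{I_N}|T_ka|^p\,d\mu\le c$; since $2-2p>1$ in part a), $\sum_{k>M_N}c\,k^{-(2-2p)}\le cM_N^{-(1-2p)}\le c$, and in part b) the telescoping $\tfrac{1}{\log n}\sum_{M_N<k\le n}c/k\le c$ makes the support part harmless in both cases.

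The core is the estimate over $\overline{I_N}$, which is where the auxiliary lemmas of Section 4 would be used. Writing $T_ka(x)=\int_{I_N}a(t)F_k(x-t)\,d\mu(t)$ and inserting \eqref{2c}, I would estimate the $T$-kernel pointwise on $\overline{I_N}$ by reducing it to Fej\'er kernels via the coefficient bound $j|K_j|\le c\sum_{l=0}^{|j|}M_l|K_{M_l}|$ from \eqref{fn5}; the non-increasing hypothesis is used (through \eqref{2cc2}) to present $T_ka$ as a convex average of the Fej\'er means $\sigma_ja$ with weights summing to $Q_k$. Using the explicit partition $\overline{I_N}=\bigcup_{k,l}I_N^{k,l}$ from \eqref{1.1} and the known pointwise behaviour of $K_{M_l}$ on each block $I_N^{k,l}$, one bounds $\int_{\overline{I_N}}|T_ka|^p\,d\mu$ by a sum over blocks; summing these against the weights $k^{-(2-2p)}$ (resp. $1/k$) then yields the two uniform atom bounds.

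The hard part will be b), the endpoint $p=1/2$. There $2-2p=1$, the series $\sum 1/k$ diverges only logarithmically, and the $1/\log n$ normalisation leaves no slack: the per-block estimates must sum to something growing exactly like $\log n$, with constants uniform in both $N$ and $n$. The condition \eqref{fn0111}, i.e. $Q_n\ge cn$, is indispensable precisely at this endpoint, since it bounds the leading coefficient $q_{n-1}(n-1)/Q_n\le c$ and prevents the kind of kernel concentration that occurs for Riesz means (where $Q_n\sim\log n$ and \eqref{fn0111} fails). Carrying out the $\overline{I_N}$ estimate at $p=1/2$ so that this logarithmic bookkeeping closes is the main obstacle of the proof.
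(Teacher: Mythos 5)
Your overall strategy is the same as the paper's: reduce via the atomic decomposition and $p$-subadditivity to a uniform bound over $p$-atoms, observe that $T_ka=0$ for $k\le M_N$, handle the support $I_N$ by the uniform kernel bound $\Vert F_k\Vert_1\le c$ (Lemma \ref{T2}), and treat $\overline{I_N}$ through the partition \eqref{1.1} and the Fej\'er-kernel reduction \eqref{fn5}. The gap is that the decisive estimates on $\overline{I_N}$ are never actually carried out, and you yourself label the endpoint case $p=1/2$ an unresolved ``main obstacle'' rather than a proved step. For part a) the missing content is the paper's Lemmas \ref{lemma0nnT0} and \ref{lemma5aa}: for non-increasing $\{q_k\}$ and $n>M_N$ one has $\bigl\vert\frac{1}{Q_n}\sum_{j=M_N}^{n-1}q_jD_j\bigr\vert\le\frac{c}{M_N}\sum_{i=0}^{\vert n\vert}M_i\vert K_{M_i}\vert$ (using $q_{M_N}/Q_{M_N+1}\le c/M_N$), whence on each block $I_N^{k,l}$ one gets $\vert T_ma(x)\vert\le cM_lM_kM_N^{1/p-2}$, hence $\int_{\overline{I_N}}\vert T_ma\vert^{p}d\mu\le cM_N^{1-2p}$ and $\sum_{m>M_N}M_N^{1-2p}m^{2p-2}\le c$; your sketch gestures at this but never states the crucial $c/M_N$ prefactor, so part a) is only proved modulo these block estimates.

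For part b) the essential missing idea is how condition \eqref{fn0111} enters: it upgrades the prefactor from $c/M_N$ to $c/n$ (Lemma \ref{lemma0nnT}), but this gain is available only on the blocks $I_N^{k,l}$ with $l<N$, where it yields $\vert T_ma(x)\vert\le cM_lM_kM_N/m$; on the blocks $I_N^{k,N}$ one instead uses $\vert D_i(x-t)\vert\le M_k$ to get $\vert T_ma(x)\vert\le cM_kM_N$ with no decay in $m$ (Lemma \ref{lemma5a}). Summing over the partition \eqref{1.1} then gives $\int_{\overline{I_N}}\vert T_ma\vert^{1/2}d\mu\le cM_N^{1/2}N m^{-1/2}+c$, and the logarithmic average closes because $\sum_{m>M_N}m^{-3/2}\le cM_N^{-1/2}$ and $N\le c\log n$ for $n>M_N$. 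Your proposal, which presents $T_ka$ as a convex average of Fej\'er means and hopes that the per-block estimates ``sum to something growing exactly like $\log n$,'' identifies neither the $l<N$ versus $l=N$ dichotomy nor the source of the factor $1/m$, and without these the endpoint bookkeeping does not close; as it stands, part b) remains unproved.
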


\begin{corollary}
	Let $0<p\leq 1/2$ and $f\in H_{p}.$ Then there exists absolute constant $%
	c_{p},$ depending only on $p,$ such that the following inequality holds: 
	\begin{equation*}
	\frac{1}{\log ^{\left[ 1/2+p\right] }n}\overset{n}{\underset{k=1}{\sum }}%
	\frac{\left\Vert \sigma _{k}f\right\Vert _{p}^{p}}{k^{2-2p}}\leq
	c_{p}\left\Vert f\right\Vert _{H_{p}}^{p}.
	\end{equation*}
\end{corollary}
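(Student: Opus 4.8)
The plan is to obtain this inequality as a direct specialization of Theorem \ref{theorem2fejerstrong} to the constant coefficient sequence $q_{k}\equiv 1$. The first step is to recognize the Fej\'er means as the $T$ means generated by this sequence: with $q_{k}\equiv 1$ one has $Q_{n}=n$ and, since $S_{0}f=0$, $T_{n}f=\frac{1}{n}\sum_{k=0}^{n-1}S_{k}f=\frac{1}{n}\sum_{k=1}^{n-1}S_{k}f$. Writing $(n+1)T_{n+1}f=\sum_{k=0}^{n}S_{k}f=\sum_{k=1}^{n}S_{k}f=n\sigma_{n}f$ yields the exact identity $\sigma_{n}f=\frac{n+1}{n}\,T_{n+1}f$, so that $\left\Vert \sigma_{n}f\right\Vert_{p}^{p}\leq c\left\Vert T_{n+1}f\right\Vert_{p}^{p}$ with $c$ independent of $n$.

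Next I would verify that $q_{k}\equiv 1$ satisfies the hypotheses of both parts of the theorem. The constant sequence is non-increasing, so part a) is available; moreover $1/Q_{n}=1/n=O(1/n)$, which is precisely condition \eqref{fn0111}, so part b) is available as well. Regularity and $q_{0}>0$ are immediate.

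The argument then splits according to the value of $\left[1/2+p\right]$. For $0<p<1/2$ we have $1/2<1/2+p<1$, hence $\left[1/2+p\right]=0$ and the weight $\log^{\left[1/2+p\right]}n$ equals $1$; the claim reduces to $\sum_{k=1}^{n}\left\Vert \sigma_{k}f\right\Vert_{p}^{p}/k^{2-2p}\leq c_{p}\left\Vert f\right\Vert_{H_{p}}^{p}$. Substituting $\sigma_{k}f=\frac{k+1}{k}T_{k+1}f$ and using that $k^{2-2p}$ and $(k+1)^{2-2p}$ are comparable uniformly in $k$, I would bound the left-hand side by $c\sum_{j=1}^{\infty}\left\Vert T_{j}f\right\Vert_{p}^{p}/j^{2-2p}$ and invoke part a). For $p=1/2$ we have $\left[1/2+p\right]=\left[1\right]=1$, so the weight is $\log n$ and the statement is exactly the $\sigma$-analogue of \eqref{7nor7}; the same substitution together with the comparabilities $k\sim k+1$ and $\log n\sim\log(n+1)$ reduces the claim to part b) applied with $n+1$ in place of $n$.

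Since this is a corollary of an already established theorem, no serious obstacle arises. The only point requiring genuine care is the bookkeeping of the index shift $n\mapsto n+1$ relating $\sigma_{n}$ to $T_{n+1}$, and the verification that the accompanying weight factors ($k^{2-2p}$ versus $(k+1)^{2-2p}$, and $\log n$ versus $\log(n+1)$) are mutually comparable, so that the constants absorbed at each step remain independent of $n$.
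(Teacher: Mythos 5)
Your proposal is correct and is essentially the paper's intended argument: the corollary is stated as an immediate specialization of Theorem \ref{theorem2fejerstrong} to the constant (hence non-increasing) sequence $q_{k}\equiv 1$, for which $Q_{n}=n$ satisfies \eqref{fn0111}, with part a) covering $0<p<1/2$ and part b) covering $p=1/2$. Your careful handling of the index shift $\sigma_{n}f=\frac{n+1}{n}T_{n+1}f$ and the comparability of the weights is a detail the paper glosses over, but it does not change the route.
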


\begin{corollary}
	Let $0<p\leq 1/2$ and $f\in H_{p}.$ Then there exists an absolute constant $%
	c_{p},$ depending only on $p,$ such that the following inequalities hold: 
	\begin{equation*}
	\overset{\infty}{\underset{k=1}{\sum }}%
	\frac{\left\Vert U_k^{\alpha}f\right\Vert _{p}^{p}}{k^{2-2p}}\leq
	c_{p}\left\Vert f\right\Vert _{H_{p}}^{p},  \ \ \ \ \ \  \ 
	\overset{\infty}{\underset{k=1}{\sum }}%
	\frac{\left\Vert V_k^{\alpha}f\right\Vert _{p}^{p}}{k^{2-2p}}\leq
	c_{p}\left\Vert f\right\Vert _{H_{p}}^{p}, \ \ \ \ \ \ \ 
	\overset{\infty}{\underset{k=1}{\sum }}
	\frac{\left\Vert R_kf\right\Vert _{p}^{p}}{k^{2-2p}}\leq
	c_{p}\left\Vert f\right\Vert _{H_{p}}^{p}.
	\end{equation*}
\end{corollary}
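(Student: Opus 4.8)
The plan is to recognize each of $U_n^\alpha$, $V_n^\alpha$ and $R_n$ as a $T$ mean generated by a non-increasing coefficient sequence and then to apply Theorem \ref{theorem2fejerstrong} a) verbatim; so the only genuine task is to identify the generating sequences and check their monotonicity. For the Riesz logarithmic means we take $q_0=0$ and $q_k=1/k$ for $k\geq 1$, so that $Q_n=l_n$ and $R_nf=T_nf$, and the sequence $\{1/k\}_{k\geq 1}$ is plainly non-increasing. For $V_n^\alpha$ we take $q_0=0$ and $q_k=k^{\alpha-1}$ for $k\geq 1$, which is non-increasing because $\alpha-1<0$ when $0<\alpha<1$. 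For the inverse Ces\`aro means we take $q_k=A_k^{\alpha-1}$, and the monotonicity for $k\geq 1$ follows from the elementary ratio
\begin{equation*}
\frac{A_{k+1}^{\alpha-1}}{A_k^{\alpha-1}}=\frac{k+\alpha}{k+1}<1,\qquad 0<\alpha<1 .
\end{equation*}

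Two points make the application legitimate. First, each sequence has $q_0=0$, which at first sight violates the nominal normalization $q_0>0$ and the monotonicity at the initial index; however $q_0$ enters the $T$ mean only through the $j=0$ summand of the Abel transforms \eqref{2b}--\eqref{2cc2}, and that summand carries the factor $j=0$ and therefore vanishes. Hence only the non-increasing behaviour of $\{q_k\}_{k\geq 1}$ is actually used, which is exactly what we have verified, and regularity $\lim_n Q_n=\infty$ still holds in all three cases. Consequently, for every $0<p<1/2$ the three inequalities are precisely the conclusion of Theorem \ref{theorem2fejerstrong} a) specialized to the respective sequences, with the same absolute constant $c_p$; no further estimate is required.

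The main obstacle is the endpoint $p=1/2$, where $2-2p=1$ and the asserted bound reads $\sum_{k=1}^\infty \|T_kf\|_{1/2}^{1/2}/k\leq c\|f\|_{H_{1/2}}^{1/2}$. Part a) of the theorem does not reach $p=1/2$, and its companion part b) gives only the weaker $\log$-averaged inequality \eqref{7nor7} and, moreover, demands the regularity condition \eqref{fn0111}, namely $1/Q_n=O(1/n)$. That condition fails for all three means: one has $Q_n\sim c\,n^{\alpha}$ for $U_n^\alpha$ and $V_n^\alpha$ with $0<\alpha<1$, and $Q_n=l_n\sim\log n$ for $R_n$, so in each case $1/Q_n$ is of strictly larger order than $1/n$. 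I would therefore read the corollary as an assertion for the range $0<p<1/2$, in which it is an immediate consequence of part a); the honest $p=1/2$ statement available from the present results is the $\log$-averaged bound of part b), and only under \eqref{fn0111}, which these particular means do not satisfy.
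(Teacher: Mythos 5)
Your proposal is correct and coincides with the route the paper intends: all three means are $T$ means generated by non-increasing sequences ($q_{k}=1/k$ for $R_{n}$, $q_{k}=k^{\alpha-1}$ for $V_{n}^{\alpha}$, $q_{k}=A_{k}^{\alpha-1}$ for $U_{n}^{\alpha}$), so the inequalities are exactly Theorem \ref{theorem2fejerstrong} a) specialized to these sequences, and your verification of monotonicity and of the harmlessness of $q_{0}=0$ is precisely the checking the paper leaves implicit (the only further detail worth recording is that $U_{n}^{\alpha}$ is normalized by $A_{n}^{\alpha}$ rather than by $Q_{n}=A_{n-1}^{\alpha}$, but these differ by the bounded factor $(n+\alpha)/n$, so the estimate transfers). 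Your caveat about the endpoint is also well founded: for $p=1/2$ part a) does not apply, part b) only yields the logarithmically averaged bound and requires condition \eqref{fn0111}, which fails here since $Q_{n}\sim n^{\alpha}$ (respectively $Q_{n}=l_{n}\sim\log n$), and the paper itself notes in the introduction that no analogue of \eqref{3cc} is available for Riesz means when $p=1/2$; hence the ``$0<p\leq 1/2$'' in the corollary should be read as $0<p<1/2$, consistently with \eqref{star}, and this is a flaw in the statement rather than a gap in your argument.
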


\begin{theorem}
	\label{theorem2fejerstrong1}a) Let $0<p<1/2,$ $f\in H_{p}$ and $\{q_{k}:k\geq
	0\}$ be a sequence of non-decreasing numbers. Then there exists an absolute
	constant $c_{p},$ depending only on $p,$ such that the inequality holds:
	\begin{equation*}
	\overset{\infty }{\underset{k=1}{\sum }}\frac{\left\Vert T_{k}f\right\Vert
		_{p}^{p}}{k^{2-2p}}\leq c_{p}\left\Vert f\right\Vert _{H_{p}}^{p}
	\end{equation*}
	
	b)Let $f\in H_{1/2}$ and $\{q_{k}:k\geq 0\}$ be a sequence of non-increasing
	numbers, satisfying the condition \eqref{fn01}.
	Then there exists an absolute constant $c,$ such that the inequality holds:
	\begin{equation}
	\frac{1}{\log n}\overset{n}{\underset{k=1}{\sum }}\frac{\left\Vert
		T_{k}f\right\Vert _{1/2}^{1/2}}{k}\leq c\left\Vert f\right\Vert
	_{H_{1/2}}^{1/2}  \label{7nor}
	\end{equation}%
\end{theorem}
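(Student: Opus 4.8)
The plan is to prove both parts by atomic decomposition, reducing everything to a single uniform estimate on $p$-atoms. Since $0<p\le 1/2$ and $\int_{G_m}\left\vert \,\cdot\,\right\vert ^p\,d\mu$ is $p$-subadditive, for an atomic decomposition $f=\sum_\mu\lambda_\mu a_\mu$ with $\sum_\mu\left\vert \lambda_\mu\right\vert ^p\le c\left\Vert f\right\Vert _{H_p}^p$ one has, by linearity of $T_k$,
\[
\sum_{k}\frac{\left\Vert T_kf\right\Vert _p^p}{k^{2-2p}}\le\sum_\mu\left\vert \lambda_\mu\right\vert ^p\sum_{k}\frac{1}{k^{2-2p}}\int_{G_m}\left\vert T_ka_\mu\right\vert ^p\,d\mu,
\]
so in part a) it suffices to bound $\sum_{k}k^{-(2-2p)}\int_{G_m}\left\vert T_ka\right\vert ^p\,d\mu$ by an absolute constant, uniformly over all $p$-atoms $a$, while in part b) the same reduction leads to the $\frac{1}{\log n}$-averaged sum of \eqref{7nor}. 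Fix $a$ with $\mathrm{supp}\,a\subset I_N$, $\left\Vert a\right\Vert _\infty\le M_N^{1/p}$ and $\int_{I_N}a\,d\mu=0$. Because each $\psi_i$ with $i<M_N$ is constant on $I_N$, we get $\widehat a(i)=0$ for $i<M_N$, hence $S_ja=0$ for $j\le M_N$ and therefore $T_ka=\frac{1}{Q_k}\sum_{j=0}^{k-1}q_jS_ja=0$ for every $k\le M_N$. Thus only the indices $k>M_N$ contribute.

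For such $k$ I would split $G_m=I_N\cup\overline{I_N}$. On $I_N$ the crude bound $\left\Vert T_ka\right\Vert _\infty\le\left\Vert a\right\Vert _\infty\left\Vert F_k\right\Vert _1\le M_N^{1/p}\left\Vert F_k\right\Vert _1$ together with $\mu(I_N)=M_N^{-1}$ gives $\int_{I_N}\left\vert T_ka\right\vert ^p\,d\mu\le\left\Vert F_k\right\Vert _1^p$. The kernels are uniformly integrable: expanding $F_k$ by the Abel identity \eqref{2c}, estimating each Fej\'er kernel through \eqref{fn6}, and using \eqref{2b} with the monotonicity of $\{q_k\}$ and the regularity \eqref{fn01}, one obtains $\left\Vert F_k\right\Vert _1\le c\,q_{k-1}(k-1)/Q_k\le c$. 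For part a) this gives $\sum_{k>M_N}k^{-(2-2p)}\int_{I_N}\left\vert T_ka\right\vert ^p\le c\sum_{k>M_N}k^{-(2-2p)}\le c_pM_N^{-(1-2p)}\le c_p$, the convergence using $2-2p>1$. For part b), where $p=1/2$ turns this into the divergent harmonic sum, it is exactly the normalization $\frac{1}{\log n}$ that rescues the estimate, via $\frac{1}{\log n}\sum_{M_N<k\le n}\frac1k\le\frac{c}{\log n}\log\frac{n}{M_N}\le c$.

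The main work is the contribution of $\overline{I_N}$. Writing $T_ka(x)=\int_{I_N}a(t)F_k(x-t)\,d\mu(t)$ and using $\left\Vert a\right\Vert _\infty\le M_N^{1/p}$ reduces this to pointwise control of $F_k$ on $\overline{I_N}$, for which I would again pass from $F_k$ to the Fej\'er kernels $K_j$ through \eqref{2c}, replace $j\left\vert K_j\right\vert $ by $\sum_{l\le\left\vert j\right\vert }M_l\left\vert K_{M_l}\right\vert $ via \eqref{fn5}, and then insert the pointwise estimates for $\left\vert K_{M_l}\right\vert $ on the sets $I_N^{s,t}$ of the partition \eqref{1.1} (the auxiliary lemmas announced for Section 4). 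The monotonicity bounds the Abel weights $q_j-q_{j+1}$, while \eqref{fn01} keeps the leading coefficient $q_{k-1}(k-1)/Q_k$ under control; one then sums the resulting bounds over the double index $(s,t)$ in \eqref{1.1} and over $k>M_N$ against $k^{-(2-2p)}$. This double summation, balancing the gain $M_N^{1/p-1}$ from the atom against the concentration of the kernels and the geometric growth of the $M_l$, is the step I expect to be the main obstacle, and it is here, at $p=1/2$, that the scales $l$ generate the single logarithmic factor absorbed by $\frac{1}{\log n}$.

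Combining the two regions yields the uniform atomic estimate, which the reduction above promotes to the stated inequalities for all $f\in H_p$. As a check on part a), note that for $0<p<1/2$ one has $[1/2+p]=0$, so \eqref{3cc} specializes to $\sum_k\left\Vert \sigma_kf\right\Vert _p^p/k^{2-2p}\le c_p\left\Vert f\right\Vert _{H_p}^p$; through \eqref{2cc2} this controls the ``diagonal'' term $q_{k-1}(k-1)\sigma_{k-1}f$ directly and may replace the kernel computation there. The genuinely new point compared with the non-increasing case of Theorem \ref{theorem2fejerstrong} is that for non-decreasing $\{q_k\}$ the differences $q_j-q_{j+1}$ are nonpositive, so the summed term and the diagonal term exchange their roles, and it is precisely condition \eqref{fn01}, automatic in the applications (cf. \eqref{node00}), that tames the now-dominant diagonal term.
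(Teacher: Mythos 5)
Your proposal is correct and follows essentially the same route as the paper: the paper proves this theorem by repeating, with the kernel estimates of Lemmas \ref{lemma0nnT1}, \ref{lemma5aT} and \ref{lemma5b} (where condition \eqref{fn01} tames the dominant term $q_{n-1}(n-1)/Q_n$, exactly as you note), the atomic argument of Theorem \ref{theorem2fejerstrong} — atomic decomposition via Lemma \ref{lemma2.1}, vanishing of $T_ka$ for $k\leq M_N$, the $L_\infty$ bound on $I_N$ from $\Vert F_k\Vert_1\leq c$, and the Abel-transform plus Fej\'er-kernel estimates on the pieces $I_N^{k,l}$ of \eqref{1.1}. The double summation you flag as the main obstacle is carried out in the paper in displays \eqref{7aaa}--\eqref{15b} and goes through exactly as you outline, with the factor $N\leq c\log n$ at $p=1/2$ absorbed by the $1/\log n$ normalization.
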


\begin{corollary}
	Let $0<p\leq 1/2,$ $f\in H_{p}$ and $\{q_{k}:k\geq 0\}$ be a sequence of
	non-decreasing numbers, such that
$\sup_{n\in \mathbb{N}}q_{n}<c<\infty.$
	Then condition (\ref{fn01}) is satisfied and for all such $T$ means there
	exists an absolute constant $c,$ such that the inequality (\ref{7nor}) holds.
\end{corollary}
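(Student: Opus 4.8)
The plan is to obtain this statement as an immediate consequence of Theorem \ref{theorem2fejerstrong1} b). That theorem already delivers inequality \eqref{7nor}, with an absolute constant, for every monotone sequence $\{q_k:k\geq 0\}$ obeying the growth condition \eqref{fn01}. Consequently I would not reprove \eqref{7nor} from scratch; the sole task is to check that under the boundedness hypothesis $\sup_{n}q_n<c<\infty$ the sequence automatically satisfies \eqref{fn01}, after which the inequality is simply quoted from the theorem.

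To verify \eqref{fn01} I would argue exactly as in the analogous corollary following Theorem \ref{theorem3fejermax22221}. Since $\{q_k\}$ is non-decreasing we have $q_k\geq q_0$ for every $k$, and because we always assume $q_0>0$ this gives the linear lower bound $Q_n=\sum_{k=0}^{n-1}q_k\geq n\,q_0$. On the other hand boundedness yields $q_{n-1}\leq \sup_k q_k<c$. Combining the two estimates,
\begin{equation*}
\frac{q_{n-1}}{Q_n}\leq \frac{c}{Q_n}\leq \frac{c}{q_0\,n}=\frac{c_1}{n}=O\left(\frac{1}{n}\right),\qquad n\to\infty,
\end{equation*}
which is precisely condition \eqref{fn01}. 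The same lower bound $Q_n\geq n\,q_0\to\infty$ simultaneously shows that the summability method is regular, so the $T$ means are well defined and all hypotheses of the theorem are genuinely in force.

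Having checked the growth condition, I would close by invoking Theorem \ref{theorem2fejerstrong1} b) directly for our $\{q_k\}$, which produces \eqref{7nor} and finishes the argument. I do not expect any serious obstacle: all the analytic content — the Abel-transform representation \eqref{2cc2} of $T_nf$ through Fej\'er means and the strong-convergence estimates underlying \eqref{3cc} — is already absorbed into the cited theorem, and the corollary is essentially a verification that a concrete family of weights falls within its scope. The only point deserving the slightest care is the role of the standing assumption $q_0>0$, which is what forces $Q_n$ to grow linearly; were $q_0=0$ one would instead pick the first index $k_0$ with $q_{k_0}>0$ and replace $Q_n\geq n\,q_0$ by $Q_n\geq (n-k_0)\,q_{k_0}$ for large $n$, obtaining the same $O(1/n)$ conclusion.
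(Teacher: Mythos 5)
Your argument is correct and coincides with the paper's own reasoning: one verifies condition \eqref{fn01} from $q_{n-1}\leq \sup_k q_k<c$ and $Q_n\geq q_0 n$ (exactly the chain displayed in the analogous corollary after Theorem \ref{theorem3fejermax22221}) and then quotes Theorem \ref{theorem2fejerstrong1} b) to obtain \eqref{7nor}. Your extra remarks on regularity and on the case $q_0=0$ are harmless additions; no gap here.
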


We have already considered the case when the sequence $\{q_{k}:k\geq 0\}$ is
bounded. Now, we consider some Nörlund means, which are generated by a
unbounded sequence $\{q_{k}:k\geq 0\}.$

\begin{corollary}
	Let $0<p\leq 1/2$ and $f\in H_{p}.$ Then there exists an absolute constant $%
	c_{p},$ depending only on $p,$ such that the following inequality holds: 
	\begin{equation*}
	\frac{1}{\log ^{\left[ 1/2+p\right] }n}\overset{n}{\underset{k=1}{\sum }}%
	\frac{\left\Vert B^{\alpha,\beta}_kf\right\Vert _{p}^{p}}{k^{2-2p}}\leq
	c_{p}\left\Vert f\right\Vert _{H_{p}}^{p}.
	\end{equation*}
\end{corollary}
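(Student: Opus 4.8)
The plan is to derive this inequality as a direct consequence of Theorem \ref{theorem2fejerstrong1}. The key observation is that $B_n^{\alpha,\beta}$ is precisely the $T$ mean generated by the sequence $\{q_0=0,\ q_k=\log^{(\beta)}k^{\alpha}:k\in\mathbb{N}_+\}$, which is non-decreasing since $k\mapsto k^{\alpha}$ is increasing for $\alpha>0$ and the iterated logarithm is monotone. Thus the abstract strong convergence statements for non-decreasing coefficients apply verbatim, and the entire task reduces to checking the hypotheses and matching the logarithmic weight $\log^{[1/2+p]}n$ across the two relevant ranges of $p$.

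First I would record that the regularity condition \eqref{fn01} needed in part b) holds automatically in the present setting: this is exactly the content of \eqref{node00}, where $q_{n-1}/Q_n=O(1/n)$ was established from the two-sided bound $\frac{n}{2}\log^{(\beta)}\frac{n^{\alpha}}{2^{\alpha}}\le Q_n\le n\log^{(\beta)}n^{\alpha}$. Consequently no further restriction on $\{q_k\}$ is required, even though the sequence is unbounded. Next I would split according to the integer part $[1/2+p]$. For $0<p<1/2$ we have $[1/2+p]=0$, so $\log^{[1/2+p]}n=1$ and the claimed inequality becomes $\sum_{k=1}^{n}\|B_k^{\alpha,\beta}f\|_p^p/k^{2-2p}\le c_p\|f\|_{H_p}^p$, which is immediate from part a) of Theorem \ref{theorem2fejerstrong1} (indeed the full series already converges). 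For $p=1/2$ we have $[1/2+p]=1$, so $\log^{[1/2+p]}n=\log n$ and $k^{2-2p}=k$; the assertion then reads $\frac{1}{\log n}\sum_{k=1}^{n}\|B_k^{\alpha,\beta}f\|_{1/2}^{1/2}/k\le c\|f\|_{H_{1/2}}^{1/2}$, which is precisely inequality \eqref{7nor} of part b), now legitimately applicable because \eqref{fn01} has been verified.

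The point demanding care, rather than a genuine obstacle, is the endpoint $p=1/2$: there the corollary is not obtained by specializing the $p<1/2$ estimate but instead requires the separate logarithmically weighted statement of part b), together with the check that the unbounded coefficient sequence still satisfies the regularity condition \eqref{fn01}. Since that condition is furnished by \eqref{node00}, combining the two cases yields the corollary.
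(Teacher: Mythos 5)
Your proposal is correct and follows exactly the route the paper intends: the corollary is a direct specialization of Theorem \ref{theorem2fejerstrong1} to the non-decreasing sequence $\{q_0=0,\ q_k=\log^{(\beta)}k^{\alpha}\}$, with part a) covering $0<p<1/2$ (where $\log^{[1/2+p]}n=1$) and part b) covering $p=1/2$, the hypothesis \eqref{fn01} being supplied by \eqref{node00}. Nothing further is needed.
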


\section{Auxiliary lemmas}

We need the following auxiliary Lemmas:

\begin{lemma}[see e.g. \protect\cite{We3}]
\label{lemma2.1} A martingale $f=\left( f^{\left( n\right) },n\in \mathbb{N}\right) $ is in $H_{p}\left( 0<p\leq 1\right) $ if and only if there exists
a sequence $\left( a_{k},k\in 
\mathbb{N}
\right) $ of p-atoms and a sequence $\left( \mu _{k},k\in \mathbb{N}
\right) $ of real numbers such that, for every $n\in \mathbb{N},$ 
\begin{equation} \label{1}
\qquad \sum_{k=0}^{\infty }\mu _{k}S_{M_{n}}a_{k}=f^{\left( n\right) },\text{
\ \ a.e.,}   \ \ \ \ \text{where} \ \ \ \ \sum_{k=0}^{\infty }\left\vert \mu _{k}\right\vert ^{p}<\infty . 
\end{equation}
Moreover,
\begin{equation*}
\left\Vert f\right\Vert _{H_{p}}\backsim \inf \left( \sum_{k=0}^{\infty
}\left\vert \mu _{k}\right\vert ^{p}\right) ^{1/p}
\end{equation*}
\textit{where the infimum is taken over all decompositions of} $f$ \textit{%
of the form} (\ref{1}).
\end{lemma}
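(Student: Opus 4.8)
The plan is to prove the stated equivalence in two directions, disposing of the elementary sufficiency first and devoting the main effort to the necessity, whose stopping-time construction is the real obstacle. First I would verify that every $p$-atom already lies in $H_p$ with $\left\Vert a\right\Vert_{H_p}\leq 1$. Let $a$ be a $p$-atom supported on $I=I_N(x)$, so that $\int_I a\,d\mu=0$ and $\left\Vert a\right\Vert_\infty\leq\mu(I)^{-1/p}$. Recalling that $S_{M_n}$ coincides with the conditional expectation $E\left(\cdot\mid\digamma_n\right)$ (by \eqref{8dn}, since $D_{M_n}=M_n\mathbf{1}_{I_n}$), I observe that for $n\leq N$ the only atom of $\digamma_n$ meeting $\mathrm{supp}\,a$ is $I_n(x)\supseteq I$, on which $a$ integrates to $0$; hence $S_{M_n}a=0$. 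For $n>N$ the average $S_{M_n}a$ is supported in $I$ and dominated by $\left\Vert a\right\Vert_\infty$. Consequently $a^{\ast}\leq\mu(I)^{-1/p}\mathbf{1}_I$ and $\int_{G_m}(a^{\ast})^p\,d\mu\leq 1$. For a general sum $f=\sum_k\mu_k a_k$ one has $f^{\ast}\leq\sum_k\left\vert\mu_k\right\vert a_k^{\ast}$, so, since $0<p\leq 1$, the inequality $\left(\sum_k t_k\right)^p\leq\sum_k t_k^p$ gives $\left\Vert f\right\Vert_{H_p}^p\leq\sum_k\left\vert\mu_k\right\vert^p$, which is one half of the norm equivalence.

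For the converse I would use the classical martingale stopping-time decomposition. Given $f\in H_p$, set for each $k\in\mathbb{Z}$ the stopping time $\tau_k:=\inf\{n\in\mathbb{N}:\left\vert f^{(n)}\right\vert>2^k\}$ (with $\inf\emptyset=\infty$) and let $f^{\tau_k}$ denote the stopped martingale. Telescoping at each fixed level $n$ gives $f^{(n)}=\sum_{k\in\mathbb{Z}}\left(f^{\tau_{k+1}}-f^{\tau_k}\right)^{(n)}$ (the constant term $\widehat f(0)$ being treated separately), which is precisely the form required in \eqref{1}. On each atom $I$ of the $\sigma$-algebra $\digamma_{\tau_k}$ the increment $f^{\tau_{k+1}}-f^{\tau_k}$ has vanishing mean by optional stopping, is supported on $\{\tau_k<\infty\}$, and—by the regularity of the Vilenkin filtration $\{\digamma_n\}$, a consequence of $\sup_n m_n<\infty$—is bounded by $c\,2^k$. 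Normalizing by $\mu_k^I:=c\,2^k\mu(I)^{1/p}$ thus turns it into a genuine $p$-atom $a_k^I$ supported on $I$, and relabelling the countable family $\{(\mu_k^I,a_k^I)\}$ as $(\mu_j,a_j)$ produces a decomposition of the required type.

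It then remains to estimate $\sum_j\left\vert\mu_j\right\vert^p$. Summing the normalizations over the atoms of $\digamma_{\tau_k}$ and using $\mu(\tau_k<\infty)=\mu(f^{\ast}>2^k)$ gives $\sum_I\left\vert\mu_k^I\right\vert^p\leq c\,2^{kp}\mu(f^{\ast}>2^k)$, after which the dyadic layer-cake comparison $\sum_{k\in\mathbb{Z}}2^{kp}\mu(f^{\ast}>2^k)\leq c_p\int_0^\infty\lambda^{p-1}\mu(f^{\ast}>\lambda)\,d\lambda=c_p'\left\Vert f^{\ast}\right\Vert_p^p$ yields $\sum_j\left\vert\mu_j\right\vert^p\leq c_p\left\Vert f\right\Vert_{H_p}^p$, completing the equivalence.

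The crux is the middle step: one must check simultaneously that the stopped increments have vanishing mean on the atoms of $\digamma_{\tau_k}$, are supported exactly there, and obey the bound $c\,2^k$ that makes each normalized increment $a_k^I$ meet the $L^\infty$-constraint $\mu(I)^{-1/p}$—all of which rest on optional stopping and on the regularity of the dyadic-type Vilenkin filtration. Once these structural facts are in place, both the decomposition and the two-sided norm estimate follow routinely; as this is the standard atomic characterization of martingale $H_p$, I would in any case be content to cite \cite{We3}.
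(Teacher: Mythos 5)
The paper does not prove this lemma at all: it is quoted from Weisz \cite{We3}, so the only thing to compare against is the standard argument you are reconstructing. Your first direction is fine: $S_{M_n}a=E(a\mid\digamma_n)$, a $p$-atom satisfies $a^{\ast}\leq \mu(I)^{-1/p}\mathbf{1}_I$, and the inequality $(\sum_k t_k)^p\leq\sum_k t_k^p$ for $0<p\leq 1$ gives $\Vert f\Vert_{H_p}^p\leq\sum_k\vert\mu_k\vert^p$.

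The necessity direction, however, has a genuine gap exactly at the step you call the crux. With the naive stopping times $\tau_k:=\inf\{n:\vert f^{(n)}\vert>2^k\}$ the increments $f^{\tau_{k+1}}-f^{\tau_k}$ are \emph{not} bounded by $c\,2^k$: the overshoot $\vert f^{(\tau_k)}\vert$ at the stopping moment is uncontrolled, and regularity of the Vilenkin filtration does not repair this, since the inequality $g_n\leq R\,g_{n-1}$ holds for \emph{nonnegative} martingales only; for signed $f$ it yields $\vert f^{(n)}\vert\leq R\,E(\vert f^{(n)}\vert\mid\digamma_{n-1})$, not $\vert f^{(n)}\vert\leq R\,\vert f^{(n-1)}\vert$. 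A two-point Walsh example shows the failure: on an atom of $\digamma_{n-1}$ split into two halves let $f^{(n)}$ take the values $M$ and $2^{k+1}-M$; then $\vert f^{(n-1)}\vert\leq 2^k$ while $\vert f^{(n)}\vert=M$ is arbitrarily large. Moreover, with these stopping times the telescoping only gives $f-\lim_{k\to-\infty}f^{\tau_k}$, and that limit is $f$ stopped at its first nonzero value (not merely the term $\widehat f(0)$), so even the representation \eqref{1} is not established as written. The standard repair, which is what Weisz actually does, is to stop by a \emph{predictable} control: one takes $\lambda_n$ nondecreasing, $\digamma_{n-1}$-measurable, with $\vert f^{(n)}\vert\leq\lambda_n$, sets $\nu_k:=\inf\{n:\lambda_{n+1}>2^k\}$, so that $\vert f^{\nu_k,(m)}\vert\leq 2^k$ holds by predictability and the stopped parts tend to $0$ as $k\to-\infty$; regularity ($\sup_n m_n<\infty$) enters only to guarantee $\Vert\lambda_\infty\Vert_p\leq c\Vert f^{\ast}\Vert_p$, i.e.\ the identification of the maximal Hardy space with the predictable space $P_p$. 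Once the stopping is predictable, your remaining steps (zero means over the atoms of $\digamma_{\nu_k}$, localization of the increments to the intervals composing $\{\nu_k<\infty\}=\{f^{\ast}>2^k\}$, and the layer-cake estimate $\sum_k 2^{kp}\mu(f^{\ast}>2^k)\leq c_p\Vert f^{\ast}\Vert_p^p$) go through as you describe.
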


\begin{lemma}[see e.g. \protect\cite{We3}]
\label{lemma2.2} Suppose that an operator $T$ is $\sigma $-sublinear and for
some $0<p\leq 1$%
\begin{equation*}
\int\limits_{\overset{-}{I}}\left\vert Ta\right\vert ^{p}d\mu \leq
c_{p}<\infty ,
\end{equation*}%
for every $p$-atom $a$, where $I$ denotes the support of the atom. If $T$ is
bounded from $L_{\infty \text{ }}$ to $L_{\infty },$ then 
\begin{equation*}
\left\Vert Tf\right\Vert _{p}\leq c_{p}\left\Vert f\right\Vert _{H_{p}},%
\text{ }0<p\leq 1.
\end{equation*}
\end{lemma}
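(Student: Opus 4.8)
The plan is to reduce the global bound to a single-atom estimate and then splice the atoms together using the atomic decomposition of Lemma~\ref{lemma2.1}. Since $0<p\leq 1$, the quantity $\|Tf\|_p^p=\int_{G_m}|Tf|^p\,d\mu$ behaves subadditively, so the whole argument will run at the level of $p$-th powers, which is exactly what makes the two hypotheses fit together.

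First I would fix a single $p$-atom $a$ with supporting interval $I$ and show that $\int_{G_m}|Ta|^p\,d\mu\leq c_p$ with a constant independent of $a$. I split $G_m=I\cup\overline{I}$, where $\overline{I}=G_m\setminus I$. On $\overline{I}$ the bound $\int_{\overline{I}}|Ta|^p\,d\mu\leq c_p$ is precisely the hypothesis of the lemma, so nothing has to be done there. On $I$ I would use the $L_\infty\to L_\infty$ boundedness of $T$ together with the normalization $\|a\|_\infty\leq\mu(I)^{-1/p}$ built into the definition of a $p$-atom: this gives $\|Ta\|_\infty\leq c\|a\|_\infty\leq c\,\mu(I)^{-1/p}$, whence
\[
\int_I|Ta|^p\,d\mu\leq \|Ta\|_\infty^p\,\mu(I)\leq c^p\,\mu(I)^{-1}\mu(I)=c^p.
\]
Adding the two pieces yields $\int_{G_m}|Ta|^p\,d\mu\leq c_p$ uniformly in $a$. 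The elegant point is that the $L_\infty$ bound disposes of the behaviour on the support $I$ for free, which is exactly why the hypothesis needs only to control the integral over the complement $\overline{I}$.

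Next I would pass to a general $f\in H_p$. By Lemma~\ref{lemma2.1} I write $f=\sum_{k}\mu_k a_k$ with $p$-atoms $a_k$ and $\sum_k|\mu_k|^p<\infty$. Using that $T$ is $\sigma$-sublinear I obtain $|Tf|\leq\sum_k|\mu_k|\,|Ta_k|$, and since $0<p\leq 1$ the elementary inequality $\big(\sum_k t_k\big)^p\leq\sum_k t_k^p$ gives $|Tf|^p\leq\sum_k|\mu_k|^p|Ta_k|^p$. Integrating term by term (justified by monotone convergence) and inserting the single-atom bound,
\[
\int_{G_m}|Tf|^p\,d\mu\leq\sum_k|\mu_k|^p\int_{G_m}|Ta_k|^p\,d\mu\leq c_p\sum_k|\mu_k|^p.
\]
Taking the infimum over all admissible decompositions and invoking the equivalence $\|f\|_{H_p}^p\backsim\inf\sum_k|\mu_k|^p$ of Lemma~\ref{lemma2.1} finishes the argument.

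I expect the only genuinely delicate points to be the justification of $|Tf|\leq\sum_k|\mu_k|\,|Ta_k|$ from $\sigma$-sublinearity (which uses both the sublinearity applied to the full series and the convergence of the atomic expansion) together with the term-by-term integration; everything else is routine. The argument is otherwise a clean two-region split, and beyond these measure-theoretic justifications no structural obstacle is anticipated.
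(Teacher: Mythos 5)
The paper itself gives no proof of this lemma---it is stated with the citation to Weisz \cite{We3}---and your argument is precisely the standard proof from that source: the single-atom estimate via the split $G_m=I\cup\overline{I}$ (the $L_{\infty}\to L_{\infty}$ bound together with $\Vert a\Vert_{\infty}\le\mu(I)^{-1/p}$ handling $I$, the hypothesis handling $\overline{I}$), followed by the atomic decomposition of Lemma~\ref{lemma2.1}, $\sigma$-sublinearity, and the subadditivity of $t\mapsto t^{p}$ for $0<p\le 1$. The proposal is correct; the only point to state carefully is the one you already flag, namely that Lemma~\ref{lemma2.1} decomposes $f$ at the martingale level ($f^{(n)}=\sum_{k}\mu_{k}S_{M_{n}}a_{k}$ a.e.), so the passage to $|Tf|\le\sum_{k}|\mu_{k}|\,|Ta_{k}|$ is exactly what the $\sigma$-sublinearity hypothesis is designed to license.
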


\begin{lemma}[see \protect\cite{gat}] \label{gatkn}
\label{lemma2} Let $n>t,$ $t,n\in \mathbb{N}.$ Then%
\begin{equation*}
K_{M_{n}}\left( x\right) =\left\{ 
\begin{array}{c}
\frac{M_{t}}{1-r_{t}\left( x\right) },\text{ \ }x\in I_{t}\backslash I_{t+1},
\text{ }x-x_{t}e_{t}\in I_{n}, \\ 
\frac{M_{n}-1}{2},\text{ \ \ \ \ \ \ \ \ \ \ \ \ \ \ \ \ \ \ \ \ \ \ \ \ \ \
\ \ \ }x\in I_{n}, \\ 
0,\text{ \ \ \ \ \ \ \ \ \ \ \ \ \ \ \ \ \ \ \ \ \ \ \ \ \ \ \ \ \ \
otherwise.}
\end{array}
\right.
\end{equation*}
\end{lemma}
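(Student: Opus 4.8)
The plan is to reduce the whole computation to the explicit value of $D_{M_n}$ recorded in \eqref{8dn} together with two elementary summations of Vilenkin characters over a full block of digits. First I would change the order of summation in the definition of the Fej\'er kernel: writing each $D_k=\sum_{j=0}^{k-1}\psi_j$ and counting, for a fixed $j$, how many indices $k$ contribute the character $\psi_j$, one lands on an identity of the form
\begin{equation*}
M_nK_{M_n}(x)=\sum_{j=0}^{M_n-1}\left(M_n-1-j\right)\psi_j(x)=\left(M_n-1\right)D_{M_n}(x)-\sum_{j=0}^{M_n-1}j\,\psi_j(x).
\end{equation*}
Since $D_{M_n}(x)=M_n\mathbf 1_{I_n}(x)$ is already known, the entire problem collapses to evaluating the weighted character sum $\sum_{j=0}^{M_n-1}j\,\psi_j(x)$, after which the three cases of the statement will follow by inspection.

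Next I would expand every $j<M_n$ in the generalized number system, $j=\sum_{i=0}^{n-1}j_iM_i$ with $0\le j_i<m_i$, and use the product form $\psi_j=\prod_{i=0}^{n-1}r_i^{j_i}$. Because $j=\sum_i j_iM_i$ is \emph{additive} in the digits while $\psi_j$ is \emph{multiplicative}, the double sum factorizes as
\begin{equation*}
\sum_{j=0}^{M_n-1}j\,\psi_j(x)=\sum_{i=0}^{n-1}M_i\left(\sum_{s=0}^{m_i-1}s\,r_i^s(x)\right)\prod_{\substack{0\le l<n\\ l\ne i}}\left(\sum_{s=0}^{m_l-1}r_l^s(x)\right).
\end{equation*}
Here I would invoke the two identities for the $m_l$-th root of unity $r_l(x)=\exp(2\pi i x_l/m_l)$: the plain geometric sum $\sum_{s=0}^{m_l-1}r_l^s(x)$ equals $m_l$ when $x_l=0$ and vanishes otherwise, while the weighted sum $\sum_{s=0}^{m_i-1}s\,r_i^s(x)$ equals $m_i(m_i-1)/2$ when $x_i=0$ and equals $m_i/(r_i(x)-1)$ when $x_i\ne0$.

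The decisive step is then a short bookkeeping argument on the number of nonzero coordinates of $x$ among $x_0,\dots,x_{n-1}$, since the weighted factor never vanishes and survival of the $i$-th summand is governed solely by the product, which is nonzero precisely when $x_l=0$ for every $l\ne i$ with $l<n$. If $x\in I_n$, every summand survives, and after inserting $m_i(m_i-1)/2$ and applying the telescoping identity $\sum_{i=0}^{n-1}M_i(m_i-1)=M_n-1$ the sum collapses to $M_n(M_n-1)/2$; together with $D_{M_n}=M_n$ this yields the constant $(M_n-1)/2$. If $x$ has exactly one nonzero coordinate $x_t\ne0$ with $t<n$ and vanishes at the remaining $l<n$ --- equivalently $x\in I_t\setminus I_{t+1}$ with $x-x_te_t\in I_n$ --- then only $i=t$ survives, the product over $l\ne t$ contributes $M_n/m_t$, the weighted factor contributes $m_t/(r_t(x)-1)$, and since $D_{M_n}(x)=0$ one is left with $M_t/(1-r_t(x))$. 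If $x$ has at least two nonzero coordinates below $n$, every product contains a vanishing geometric factor and $D_{M_n}(x)=0$, so $K_{M_n}(x)=0$.

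I expect the main obstacle to be the careful case analysis in this last step, coupled with the correct evaluation of the weighted geometric sum $\sum_{s=0}^{m_i-1}s\,r_i^s(x)=m_i/(r_i(x)-1)$ for a nontrivial root of unity; once these are secured the three branches of the formula drop out mechanically. An equally viable alternative would be an induction on $n$, splitting $\{0,\dots,M_n-1\}$ into the residue blocks $\{sM_{n-1}+k\}$ and using the splitting $D_{sM_{n-1}+k}=D_{sM_{n-1}}+r_{n-1}^sD_k$ together with $D_{sM_{n-1}}=\big(\sum_{i=0}^{s-1}r_{n-1}^i\big)D_{M_{n-1}}$, but the direct digit expansion above seems the most transparent.
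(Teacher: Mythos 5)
The paper never proves this lemma at all --- it is quoted from \cite{gat} --- so your argument is a self-contained substitute rather than a parallel of anything in the text, and it is essentially correct. Your route (reordering $\sum_k D_k$ into a weighted character sum, factorizing over the Vilenkin digits, and evaluating the two root-of-unity sums $\sum_{s=0}^{m_l-1}r_l^s(x)$ and $\sum_{s=0}^{m_i-1}s\,r_i^s(x)$) works: the factorization is legitimate because $\psi_j$ is multiplicative in the digits while $j=\sum_i j_iM_i$ is additive, the evaluation $\sum_{s=0}^{m-1}s z^s=m/(z-1)$ for a nontrivial $m$-th root of unity is correct, the telescoping $\sum_{i=0}^{n-1}M_i(m_i-1)=M_n-1$ is correct, and your trichotomy (no nonzero coordinate among $x_0,\dots,x_{n-1}$, exactly one, at least two) exhausts all cases and reproduces the three branches of the statement; the inductive block-splitting you mention as an alternative is closer to the proofs usually found in the literature, but the direct digit expansion is more transparent.

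One caveat you should settle explicitly: with this paper's definition $K_n:=\frac{1}{n}\sum_{k=1}^{n}D_k$, counting how often $\psi_j$ occurs gives $M_n-j$ (the indices $k=j+1,\dots,M_n$), not $M_n-1-j$; your opening identity corresponds to the convention $K_n=\frac{1}{n}\sum_{k=0}^{n-1}D_k$. The discrepancy between the two is exactly $D_{M_n}$, which vanishes off $I_n$, so the first and third branches are unaffected; on $I_n$ the paper's own definition yields $(M_n+1)/2$ rather than the stated $(M_n-1)/2$ (test $n=1$, $m_0=2$: $K_2=\frac{1}{2}(D_1+D_2)=\frac{3}{2}$ on $I_1$). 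Thus the constant $(M_n-1)/2$ in the quoted lemma is itself tied to the other convention, and your proof is internally consistent with that convention; in any case the difference of $1$ on $I_n$ is immaterial for every estimate in which the lemma is used in this paper. Either state which convention you are adopting, or carry the correct count $M_n-j$ and record the value $(M_n+1)/2$ on $I_n$; with that single clarification the argument is complete.
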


For the proof of our main results we also need the following new Lemmas of
independent interest:

\begin{lemma}\label{T2} 
	Let $n\in\mathbb{N}$ and $\{q_k:k\in\mathbb{N}\}$ be a sequence either of non-increasing numbers, or non-decreasing numbers satisfying condition \eqref{fn01}. Then
	\begin{equation} 	\label{2e}
	\Vert F_n\Vert_1<c<\infty.
	\end{equation}	
\end{lemma}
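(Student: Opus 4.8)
The plan is to reduce everything to the uniform $L_{1}$ bound \eqref{fn6} for the Fej\'er kernels by exploiting the Abel-transformed representation \eqref{2c} of the kernel $F_{n}$ in terms of the $K_{j}$. Taking the $L_{1}$ norm in \eqref{2c}, applying the triangle inequality, and invoking \eqref{fn6}, I would obtain
\[
\Vert F_{n}\Vert_{1}\leq\frac{1}{Q_{n}}\left(\sum_{j=0}^{n-2}\vert q_{j}-q_{j+1}\vert\, j\,\Vert K_{j}\Vert_{1}+q_{n-1}(n-1)\Vert K_{n-1}\Vert_{1}\right)\leq\frac{c}{Q_{n}}\left(\sum_{j=0}^{n-2}\vert q_{j}-q_{j+1}\vert\, j+q_{n-1}(n-1)\right).
\]
The whole problem then reduces to estimating the sum of the absolute values of the consecutive differences against $Q_{n}$, which is precisely what the Abel identity \eqref{2b} governs; the two hypotheses on $\{q_{k}\}$ serve only to control the sign of $q_{j}-q_{j+1}$ and, in the increasing case, the size of the surviving boundary term.

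In the non-increasing case the differences satisfy $q_{j}-q_{j+1}\geq0$, so $\vert q_{j}-q_{j+1}\vert=q_{j}-q_{j+1}$ and the bracket is exactly $\sum_{j=0}^{n-2}(q_{j}-q_{j+1})j+q_{n-1}(n-1)=Q_{n}$ by \eqref{2b}. Hence $\Vert F_{n}\Vert_{1}\leq c$ with no further assumption, which already settles this case.

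In the non-decreasing case the differences have the opposite sign, so $\vert q_{j}-q_{j+1}\vert=q_{j+1}-q_{j}$. Rewriting \eqref{2b} as $\sum_{j=0}^{n-2}(q_{j}-q_{j+1})j=Q_{n}-q_{n-1}(n-1)$ and changing sign gives $\sum_{j=0}^{n-2}(q_{j+1}-q_{j})j=q_{n-1}(n-1)-Q_{n}$, so the bracket equals $2q_{n-1}(n-1)-Q_{n}\leq2q_{n-1}(n-1)$. Therefore $\Vert F_{n}\Vert_{1}\leq 2c\,q_{n-1}(n-1)/Q_{n}$, and condition \eqref{fn01}, which furnishes a constant $c'$ with $q_{n-1}/Q_{n}\leq c'/n$, yields $\Vert F_{n}\Vert_{1}\leq 2cc'(n-1)/n\leq 2cc'<\infty$.

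I expect the only genuine subtlety to be bookkeeping rather than analysis: one must match the index ranges in the Abel identity \eqref{2b} with those appearing in \eqref{2c}, and keep careful track of the sign so that the triangle inequality loses nothing. The analytic content is entirely carried by the uniform Fej\'er bound \eqref{fn6}; condition \eqref{fn01} enters at exactly one point, namely to absorb the single leading term $q_{n-1}(n-1)$ that the increasing monotonicity fails to cancel against $Q_{n}$.
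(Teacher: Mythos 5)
Your proof is correct and follows essentially the same route as the paper: Abel transformation of the kernel, the triangle inequality together with the uniform Fej\'er bound \eqref{fn6}, the exact identity $\sum_{j=0}^{n-2}(q_j-q_{j+1})j+q_{n-1}(n-1)=Q_n$ from \eqref{2b} in the non-increasing case, and the rewriting of the bracket as $2q_{n-1}(n-1)-Q_n$ absorbed by condition \eqref{fn01} in the non-decreasing case. If anything, your write-up is slightly more explicit than the paper's at the final step of the non-decreasing case, where the paper asserts $\frac{c}{Q_n}(2q_{n-1}(n-1)-Q_n)\leq c$ without spelling out the use of \eqref{fn01}.
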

\textbf{Proof:}
Let $n\in\mathbb{N}$ and $\{q_k:k\in\mathbb{N}\}$ be a sequence of non-increasing numbers. By combining   \eqref{2b} and \eqref{2cc2} with \eqref{fn6} we can conclude that 
\begin{eqnarray*} 	
	\Vert T_n\Vert_1&\leq& \frac{1}{Q_n} \left(\overset{n-2}{\underset{j=0}{\sum}}\left\vert q_j-q_{j+1}\right\vert j\Vert \sigma_j\Vert_1+q_{n-1}(n-1)\Vert \sigma_{n-1}\Vert_1\right)\\
	&\leq &\frac{c}{Q_n} \left(\overset{n-2}{\underset{j=0}{\sum}}\left(q_j-q_{j+1}\right) j+q_{n-1}(n-1)\right)\leq c<\infty.
\end{eqnarray*}	

Let $n\in\mathbb{N}$ and $\{q_k:k\in\mathbb{N}\}$ be a sequence non-decreasing sequence satisfying condition \eqref{fn01}. Then By using again   \eqref{2b} and \eqref{2cc2} with \eqref{fn6} we find that

\begin{eqnarray*} 	
	\Vert T_n\Vert_1&\leq& \frac{1}{Q_n} \left(\overset{n-2}{\underset{j=0}{\sum}}\left\vert q_j-q_{j+1}\right\vert j\Vert \sigma_j\Vert_1+q_{n-1}(n-1)\Vert \sigma_{n-1}\Vert_1\right)\\
	&\leq &\frac{c}{Q_n} \left(\overset{n-2}{\underset{j=0}{\sum}}\left(q_{j+1}-q_j\right) j+q_{n-1}(n-1)\right)\\
	&=&\frac{c}{Q_n} \left(2q_{n-1}(n-1)-\left(\overset{n-2}{\underset{j=0}{\sum}}\left(q_{j}-q_{j+1}\right) j+q_{n-1}(n-1)\right)\right)
	\\
	&=&\frac{c}{Q_n}(2q_{n-1}(n-1)-Q_n) \leq c<\infty.
\end{eqnarray*}

The proof is complete.

\begin{lemma}\label{lemma0nnT0}
	Let $\{q_k:k\in\mathbb{N}\}$ be a sequence of non-increasing numbers and $n>M_N$. Then
	\begin{equation*}
	\left\vert\frac{1}{Q_n}\overset{n-1}{\underset{j=M_{N}}{\sum }}q_{j}D_j\left( x\right)\right\vert
	\leq\frac{c}{M_N}\left\{\sum_{j=0}^{\left\vert n\right\vert }M_j\left\vert K_{M_j}\right\vert\right\},
	\end{equation*}
\end{lemma}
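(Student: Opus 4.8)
The plan is to repeat the Abel (summation-by-parts) transformation that produced the kernel identity \eqref{2c}, but truncated so that the summation begins at $j=M_N$ instead of at $0$. Writing $\sum_{k=1}^{j}D_k=jK_j$, so that $D_j=jK_j-(j-1)K_{j-1}$, and setting $B_j:=\sum_{k=M_N}^{j}D_k=jK_j-(M_N-1)K_{M_N-1}$, summation by parts gives
\[
\sum_{j=M_N}^{n-1}q_jD_j=\sum_{j=M_N}^{n-2}(q_j-q_{j+1})\Bigl(jK_j-(M_N-1)K_{M_N-1}\Bigr)+q_{n-1}\Bigl((n-1)K_{n-1}-(M_N-1)K_{M_N-1}\Bigr).
\]
First I would collect the terms carrying $K_{M_N-1}$: their total coefficient is $-(M_N-1)K_{M_N-1}\bigl(\sum_{j=M_N}^{n-2}(q_j-q_{j+1})+q_{n-1}\bigr)=-(M_N-1)q_{M_N}K_{M_N-1}$ by telescoping. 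Thus the sum reduces to three groups, $\sum_{j=M_N}^{n-2}(q_j-q_{j+1})jK_j$, $q_{n-1}(n-1)K_{n-1}$ and $-(M_N-1)q_{M_N}K_{M_N-1}$, all Fej\'er kernels evaluated at indices strictly below $n$.

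Next I would estimate every Fej\'er factor by the pointwise bound \eqref{fn5}. Since each index appearing ($j\le n-2$, then $n-1$, and $M_N-1$) is $<n$, one has $j|K_j|\le c\sum_{l=0}^{|n|}M_l|K_{M_l}|$ uniformly, and the same for the two boundary terms. Pulling this common factor out, the task collapses to bounding the scalar
\[
\frac{1}{Q_n}\Bigl(\sum_{j=M_N}^{n-2}(q_j-q_{j+1})+q_{n-1}+q_{M_N}\Bigr).
\]
Because $\{q_k\}$ is non-increasing the first sum telescopes to $q_{M_N}-q_{n-1}$, so the whole bracket equals $2q_{M_N}$.

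Finally I would verify $q_{M_N}/Q_n\le c/M_N$, which is the one place where monotonicity is genuinely used: since $q_k\ge q_{M_N-1}\ge q_{M_N}$ for $0\le k\le M_N-1$, we get $Q_n\ge\sum_{k=0}^{M_N-1}q_k\ge M_Nq_{M_N-1}\ge M_Nq_{M_N}$, hence $q_{M_N}/Q_n\le 1/M_N$. Combining the three estimates yields the claimed inequality with constant $2c$. The main obstacle is the bookkeeping after the truncated Abel transformation: one must check that the spurious $K_{M_N-1}$ contributions recombine into the single harmless term $-(M_N-1)q_{M_N}K_{M_N-1}$ rather than an accumulation that grows with $n$, and that the upper index $|n|$ in \eqref{fn5} may be used uniformly for every kernel index in the truncated range.
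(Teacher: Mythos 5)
Your proposal is correct and follows essentially the same route as the paper: a truncated Abel transformation starting at $j=M_N$, the pointwise bound \eqref{fn5} applied to every Fej\'er kernel with index below $n$, telescoping of the non-increasing differences to the scalar $2q_{M_N}/Q_n$, and the monotonicity estimate $Q_n\geq M_Nq_{M_N}$ giving the factor $c/M_N$. Your treatment of the boundary term $-(M_N-1)q_{M_N}K_{M_N-1}$ is in fact more careful than the paper's displayed identity, which records it loosely (and with a typo), but the argument is the same.
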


\begin{proof}
	Since sequence is non-increasing number  we get that
	\begin{eqnarray*}
		&&\frac{1}{Q_n}\left(q_{M_N}+\overset{n-2}{\underset{j=M_N}{\sum }}\left\vert q_j-q_{j+1}\right\vert+q_{n-1}\right)\\ &\leq&\frac{1}{Q_n}\left(q_{M_N}+\overset{n-2}{\underset{j=M_N}{\sum}}\left(q_{j}-q_{j+1} \right)+q_{n-1}\right)\\
		&\leq &
		\frac{2q_{M_N}}{Q_n}\leq\frac{2q_{M_N}}{Q_{M_N+1}}\leq \frac{c}{M_N}.
	\end{eqnarray*}
	
	If we apply Abel transformation and \eqref{fn5} we immediately get that
	\begin{eqnarray*}
		&&\left\vert\frac{1}{Q_n}\overset{n-1}{\underset{j=M_{N}}{\sum }}q_{j}D_j\left( x\right)\right\vert \\
		&=&\frac{1}{Q_n}\left( q_{M_n}K_{{M_n-1}}+\overset{n-2} {\underset{j=M_N}{\sum}}\left( q_j-q_{j+1}\right)K_j+q_{n-1}K_{n-1}\right)\\
		&\leq &  \frac{1}{Q_n}\left( q_{M_n}+\overset{n-2} {\underset{j=M_N}{\sum}}\left\vert q_j-q_{j+1}\right \vert+q_{n-1}\right)\sum_{i=0}^{\left\vert n\right\vert }M_i\left\vert K_{M_i}\right\vert \\
		&\leq &\frac{c}{M_N}\sum_{i=0}^{\left\vert n\right\vert }M_i\left\vert K_{M_i}\right\vert.
	\end{eqnarray*}
	The proof is complete.
\end{proof}

\begin{lemma} \label{lemma5aa}
	Let $x\in I_N^{k,l}, \ \ k=0,\dots,N-1, \ \ l=k+1,\dots ,N$
	and $\{q_k:k\in \mathbb{N}\}$ be a sequence of non-increasing numbers. Then there exists an absolute constant, such that
	\begin{equation*}
	\int_{I_N}	\left\vert\frac{1}{Q_n}\overset{n-1}{\underset{j=M_{N}}{\sum }}q_{j}D_j\left( x-t\right)\right\vert d\mu\left(t\right) \leq\frac{cM_lM_k}{M^2_N}.
	\end{equation*}
\end{lemma}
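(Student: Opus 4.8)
The plan is to fix $x\in I_{N}^{k,l}$ and $t\in I_{N}$ and first record the elementary but crucial geometric fact that $x-t\in I_{k}\setminus I_{k+1}$: since $t$ has vanishing coordinates $0,\dots,N-1$ and the first nonzero coordinate of $x$ sits at position $k\le N-1$, subtraction does not disturb the low coordinates, so $x-t$ has its first nonzero coordinate exactly at $k$. I would then split the argument according to whether $l<N$ or $l=N$, because the behaviour of the kernel over the free tail of $t$ is completely different in the two cases.

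For $l<N$ I would invoke Lemma \ref{lemma0nnT0}, which bounds the integrand pointwise by $\frac{c}{M_{N}}\sum_{i=0}^{|n|}M_{i}\lvert K_{M_i}(x-t)\rvert$, and then evaluate each $K_{M_i}(x-t)$ by the explicit formula of Lemma \ref{gatkn}. Since $x\in I_{N}^{k,l}$ the coordinates strictly between $k$ and $l$ vanish and $x_{l}\neq 0$ with $l<N$, the point $(x-t)-(x-t)_{k}e_{k}$ has its first nonzero coordinate at $l$, i.e. it lies in $I_{l}\setminus I_{l+1}$ for every $t\in I_{N}$. Consequently $K_{M_i}(x-t)=(M_i-1)/2$ for $i\le k$, $K_{M_i}(x-t)=M_{k}/(1-r_{k}(x-t))$ for $k<i\le l$ (whose modulus is at most $cM_{k}$ because the group is bounded, so $\lvert 1-r_{k}(x-t)\rvert\ge 2\sin(\pi/\sup_{j}m_{j})>0$), and $K_{M_i}(x-t)=0$ for $i>l$. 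Summing the two resulting geometric series gives $\sum_{i}M_{i}\lvert K_{M_i}(x-t)\rvert\le c(M_{k}^{2}+M_{k}M_{l})\le cM_{k}M_{l}$ uniformly in $t$; hence the integrand of the lemma is at most $\frac{c}{M_N}M_{k}M_{l}$ pointwise on $I_N$, and integrating over $I_{N}$, of measure $1/M_{N}$, yields the claimed bound $cM_{k}M_{l}/M_{N}^{2}$.

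The case $l=N$ is the genuine obstacle, and here Lemma \ref{lemma0nnT0} is too lossy: when $l=N$ the coordinates of $t$ beyond $N$ are free, so $(x-t)-(x-t)_{k}e_{k}$ only lies in $I_{N}$ and can be arbitrarily deep, which means the terms $K_{M_i}(x-t)$ with $i>N$ no longer vanish and a straightforward integration of the right-hand side of Lemma \ref{lemma0nnT0} overcounts by a factor of order $|n|-N$. To recover the correct estimate I would instead use a uniform pointwise bound on the Dirichlet kernels themselves: starting from the identity \eqref{9dn} and the fact that $D_{M_s}(x-t)$ equals $M_{s}$ for $s\le k$ and vanishes for $s>k$ (valid precisely because $x-t\in I_{k}\setminus I_{k+1}$), the sum over $s$ truncates at $s=k$; for $s<k$ the inner Rademacher sum equals $j_{s}\le m_{s}-1$ (as $r_{s}(x-t)=1$) and for $s=k$ it is a geometric sum of modulus at most $2/\lvert 1-r_{k}(x-t)\rvert\le c$, so that $\lvert D_{j}(x-t)\rvert\le cM_{k}$ for every $j$. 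This bound is independent of $j$, hence $\bigl\lvert\frac{1}{Q_{n}}\sum_{j=M_{N}}^{n-1}q_{j}D_{j}(x-t)\bigr\rvert\le \frac{cM_{k}}{Q_{n}}\sum_{j=M_{N}}^{n-1}q_{j}\le cM_{k}$, and integrating over $I_{N}$ gives $cM_{k}/M_{N}=cM_{k}M_{N}/M_{N}^{2}$, which is exactly the required bound when $l=N$.

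Combining the two cases completes the proof. I expect the main difficulty to be precisely the recognition that $l=N$ cannot be treated by the Fej\'er-kernel estimate of Lemma \ref{lemma0nnT0} and must be handled instead by the uniform Dirichlet-kernel bound $\lvert D_{j}(x-t)\rvert\le cM_{k}$; once that bound is isolated from \eqref{9dn}, both cases reduce to routine geometric summations. A secondary technical point, used throughout, is the bounded-Vilenkin hypothesis, which is what guarantees that $\lvert 1-r_{k}(x-t)\rvert$ stays bounded away from $0$ and hence that the factors $M_{k}/(1-r_{k}(x-t))$ are $O(M_{k})$.
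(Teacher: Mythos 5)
Your proof is correct and follows essentially the same route as the paper: for $l<N$ you combine the pointwise bound of Lemma \ref{lemma0nnT0} with G\'at's kernel formula (Lemma \ref{gatkn}) exactly as in the paper's estimate \eqref{88811}, and for $l=N$ you switch, as the paper does in \eqref{1111001}, to the uniform Dirichlet-kernel bound $\lvert D_{j}(x-t)\rvert\le cM_{k}$ obtained from \eqref{8dn} and \eqref{9dn}. Your explicit remark that the Fej\'er-kernel estimate would overcount by a factor of order $\lvert n\rvert -N$ when $l=N$ is a correct and useful justification of a switch the paper makes silently, but it does not change the argument.
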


{\bf Proof}:
Let $x\in I_N^{k,l},$ for $0\leq k<l\leq N-1$ and $t\in I_N.$ First, we observe that $x-t\in $ $I_N^{k,l}.$ Next, we apply Lemmas \ref{gatkn} and  \ref{lemma0nnT0} to obtain that
\begin{eqnarray}  \label{88811}
&&\int_{I_N}	\left\vert\frac{1}{Q_n}\overset{n-1}{\underset{j=M_{N}}{\sum }}q_{j}D_j\left( x-t\right)\right\vert d\mu\left(t\right)  \\ \notag
&\leq & \frac{c}{M_N}\underset{i=0}{\overset{\left\vert n\right\vert }{\sum}}M_i\int_{I_N}\left\vert K_{M_i}\left(x-t\right)\right\vert d\mu
\left(t\right) \\ \notag
&\leq& \frac{c}{M_N}\int_{I_N}\overset{l}{\underset{i=0}{\sum }}M_iM_kd\mu\left(t\right)  \leq \frac{cM_kM_l}{M^2_N}
\end{eqnarray}
and the first estimate is proved.

Now, let $x\in I_N^{k,N}$. Since $x-t\in I_{N}^{k,N}$ for $t\in I_N,$ by
combining \eqref{8dn} and  \eqref{9dn} we have that
$$\left\vert D_{i}\left(x-t\right)\right\vert\leq M_k$$
and
\begin{eqnarray} \label{1111001}
&&\int_{I_N}	\left\vert\frac{1}{Q_n}\overset{n-1}{\underset{j=M_{N}}{\sum }}q_{j}D_j\left( x-t\right)\right\vert d\mu\left(t\right)   \\ \notag
&\leq&\frac{c}{Q_n}\underset{i=0}{\overset{\left\vert n\right\vert } {\sum}}q_i\int_{I_N}\left\vert D_{i}\left(x-t\right)\right\vert d\mu\left(t\right) \\ \notag
&\leq&\frac{c}{Q_n}\overset{\left\vert n\right\vert-1} {\underset{i=0}{\sum }}q_i\int_{I_N}M_kd\mu\left(t\right) \leq\frac{cM_k}{M_N}.
\end{eqnarray}

According to (\ref{88811}) and (\ref{1111001}) the proof is complete.

\begin{lemma}\label{lemma0nnT}
	Let $n>M_N$ and $\{q_k:k\in\mathbb{N}\}$ be a sequence of non-increasing numbers, satisfying condition \eqref{fn0111}. Then
	\begin{equation*}
	\left\vert\frac{1}{Q_n}\overset{n-1}{\underset{j=M_{N}}{\sum }}q_{j}D_j\right\vert
	\leq\frac{c}{n}\left\{\sum_{j=0}^{\left\vert n\right\vert }M_j\left\vert K_{M_j}\right\vert\right\},
	\end{equation*}
	where $c$ is an absolute constant.
\end{lemma}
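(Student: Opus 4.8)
The plan is to reproduce the argument for Lemma~\ref{lemma0nnT0} verbatim up to its penultimate estimate, and then replace only the final scalar bound so that hypothesis \eqref{fn0111} delivers the factor $1/n$ in place of $1/M_N$. Concretely, I would first apply the Abel transformation to $\sum_{j=M_N}^{n-1}q_jD_j$ (as in the passage from \eqref{2b} to \eqref{2c}), producing a boundary term together with the interior sum $\sum_{j=M_N}^{n-2}(q_j-q_{j+1})jK_j$, and then use \eqref{fn5} to dominate each factor $j|K_j|$ (and the boundary factor $(n-1)|K_{n-1}|$) by $c\sum_{i=0}^{|n|}M_i|K_{M_i}|$. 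Pulling this common quantity out exactly as in Lemma~\ref{lemma0nnT0} reduces the claim to bounding the scalar coefficient
$$\frac{1}{Q_n}\left(q_{M_N}+\sum_{j=M_N}^{n-2}|q_j-q_{j+1}|+q_{n-1}\right)$$
by $c/n$.

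The decisive and only new step is the evaluation of this coefficient. Since $\{q_k\}$ is non-increasing, the variation sum telescopes, $\sum_{j=M_N}^{n-2}|q_j-q_{j+1}|=q_{M_N}-q_{n-1}$, so the bracket collapses to $2q_{M_N}$ and the coefficient equals $2q_{M_N}/Q_n$. Here I would depart from the proof of Lemma~\ref{lemma0nnT0}, which compares $Q_n$ with $Q_{M_N+1}\ge(M_N+1)q_{M_N}$ to obtain $c/M_N$. Instead I would use that monotonicity forces $q_{M_N}\le q_0$, a fixed positive constant, and then invoke the hypothesis \eqref{fn0111} in the quantitative form $1/Q_n\le c/n$. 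Multiplying gives $2q_{M_N}/Q_n\le 2q_0c/n=c'/n$, which is the required estimate, and the lemma follows.

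I expect no computational difficulty; the single conceptual point is to notice that the improvement over Lemma~\ref{lemma0nnT0} is obtained precisely by trading the comparison $Q_{M_N+1}\ge(M_N+1)q_{M_N}$ for the pair of facts $q_{M_N}\le q_0$ and \eqref{fn0111}, with the monotonicity of $\{q_k\}$ collapsing the variation sum so that only the bounded quantity $q_{M_N}$ survives.
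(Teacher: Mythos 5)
Your proposal matches the paper's proof: after the Abel transformation and the kernel estimate \eqref{fn5}, the paper likewise telescopes the variation sum to get the coefficient $2q_{M_N}/Q_n$ and then bounds it by $c/Q_n\leq c/n$ using the boundedness $q_{M_N}\leq q_0$ together with condition \eqref{fn0111}. This is essentially the same argument, including your observation that the only change from Lemma \ref{lemma0nnT0} is replacing the comparison with $Q_{M_N+1}$ by the pair $q_{M_N}\leq q_0$ and \eqref{fn0111}.
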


\begin{proof}
	Since sequence is non-increasing number satisfying condition \eqref{fn0111}, we get that
	\begin{eqnarray*}
		&&\frac{1}{Q_n}\left(q_{M_n}+\overset{n-2}{\underset{j=M_N}{\sum }}\left\vert q_j-q_{j+1}\right\vert+q_{n-1}\right)\\
		 &\leq&\frac{1}{Q_n}\left(q_{M_n}+\overset{n-2}{\underset{j=M_N}{\sum}}\left(q_{j}-q_{j+1} \right)+q_{n-1}\right)\\
		&\leq &
		\frac{2q_{M_N}}{Q_n}\leq \frac{c}{Q_n}\leq \frac{c}{n}.
	\end{eqnarray*}
	
	If we apply \eqref{fn5} we immediately get that
	\begin{eqnarray*}
		&&\left\vert\frac{1}{Q_n}\overset{n-1}{\underset{j=M_{N}}{\sum }}q_{j}D_j\right\vert \\
		&\leq & \left( \frac{1}{Q_n}\left( q_{M_n}+\overset{n-2} {\underset{j=M_N+1}{\sum}}\left\vert q_j-q_{j+1}\right \vert+q_{n-1}\right)\right)\sum_{i=0}^{\left\vert n\right\vert }M_i\left\vert K_{M_i}\right\vert \\
		&\leq &\frac{c}{n}\sum_{i=0}^{\left\vert n\right\vert }M_i\left\vert K_{M_i}\right\vert.
	\end{eqnarray*}
	The proof is complete.
\end{proof}

\begin{lemma} \label{lemma5a}
	Let $x\in I_N^{k,l}, \ \ k=0,\dots,N-2, \ \ l=k+1,\dots ,N-1$
	and $\{q_k:k\in \mathbb{N}\}$ be a sequence of non-increasing numbers,
	satisfying condition \eqref{fn0111}. Then
	\begin{equation*}
	\int_{I_N}\left\vert\frac{1}{Q_n}\overset{n-1}{\underset{j=M_{N}}{\sum }}q_{j}D_j\left( x-t\right)\right\vert d\mu\left(t\right) \leq\frac{cM_lM_k}{nM_N}.
	\end{equation*}
	Let $x\in I_N^{k,N},$ \ \ $k=0,\dots,N-1.$ Then
	\begin{equation*}
	\int_{I_N}\left\vert\frac{1}{Q_n}\overset{n-1}{\underset{j=M_{N}}{\sum }}q_{j}D_j\left( x-t\right)\right\vert d\mu\left(t\right) \leq\frac{cM_k}{M_N}.
	\end{equation*}%
	Here $c$ is an absolute constant.
\end{lemma}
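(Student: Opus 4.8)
The plan is to run the argument exactly parallel to the proof of Lemma \ref{lemma5aa}, the only change being that the pointwise kernel bound of Lemma \ref{lemma0nnT0} (carrying the factor $c/M_N$) is replaced by the sharper bound of Lemma \ref{lemma0nnT} (carrying the factor $c/n$). This single replacement is precisely what upgrades the final estimate from the order $M_kM_l/M_N^2$ obtained in Lemma \ref{lemma5aa} to the required order $M_kM_l/(nM_N)$, and it is the only place where the hypothesis \eqref{fn0111} enters.

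For the first estimate, fix $x\in I_N^{k,l}$ with $0\le k<l\le N-1$ and $t\in I_N$. Just as in Lemma \ref{lemma5aa}, I would first note that $x-t\in I_N^{k,l}$, since translating by an element of $I_N$ does not alter the coordinates of index less than $N$. Applying Lemma \ref{lemma0nnT} and then Lemma \ref{gatkn} gives
\begin{equation*}
\int_{I_N}\left\vert\frac{1}{Q_n}\overset{n-1}{\underset{j=M_{N}}{\sum }}q_{j}D_j\left( x-t\right)\right\vert d\mu\left(t\right)\leq\frac{c}{n}\sum_{i=0}^{\left\vert n\right\vert}M_i\int_{I_N}\left\vert K_{M_i}\left(x-t\right)\right\vert d\mu\left(t\right).
\end{equation*}
From here I would reuse verbatim the kernel bookkeeping of Lemma \ref{lemma5aa}: on $I_N^{k,l}$ the surviving indices run only up to $l$ and each term is of size at most $M_k$, so that $\sum_{i=0}^{l}M_i\le cM_l$ (a geometric sum, since $M_{i+1}\ge 2M_i$) and $\mu(I_N)=1/M_N$ together give $\frac{c}{n}\cdot\frac{cM_kM_l}{M_N}=\frac{cM_kM_l}{nM_N}$.

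For the second estimate, take $x\in I_N^{k,N}$; here I would repeat the $l=N$ case of Lemma \ref{lemma5aa}, which uses only non-negativity of the coefficients and not \eqref{fn0111}. Since $x-t\in I_N^{k,N}$ for $t\in I_N$, combining \eqref{8dn} and \eqref{9dn} yields $\left\vert D_i\left(x-t\right)\right\vert\le M_k$, whence
\begin{equation*}
\int_{I_N}\left\vert\frac{1}{Q_n}\overset{n-1}{\underset{j=M_{N}}{\sum }}q_{j}D_j\left(x-t\right)\right\vert d\mu\left(t\right)\leq\frac{c}{Q_n}\overset{\left\vert n\right\vert-1}{\underset{i=0}{\sum}}q_i\int_{I_N}M_k\,d\mu\left(t\right)\leq\frac{cM_k}{M_N},
\end{equation*}
the last step following from $\sum_{i=0}^{\left\vert n\right\vert-1}q_i\le Q_n$.

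Since the proof is essentially a transcription of Lemma \ref{lemma5aa}, there is no genuine obstacle; the one thing that must be verified with care is that the leading factor really improves to $c/n$. This is exactly the content of Lemma \ref{lemma0nnT}, where \eqref{fn0111} forces $2q_{M_N}/Q_n\le c/Q_n\le c/n$, and every subsequent step is the same geometric-sum and kernel-support estimate already performed for Lemma \ref{lemma5aa}.
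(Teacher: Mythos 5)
Your proposal is correct and follows essentially the same route as the paper: for the first estimate it applies the $c/n$ bound of Lemma \ref{lemma0nnT} (where condition \eqref{fn0111} enters) followed by the kernel estimates of Lemma \ref{gatkn} on $I_N^{k,l}$, exactly as in the paper's proof. The only immaterial difference is in the second estimate, where you bound the Dirichlet kernels directly by $M_k$ via \eqref{8dn} and \eqref{9dn} as in Lemma \ref{lemma5aa} (and one should read the intermediate sum as running over $j=M_N,\dots,n-1$ so that $\sum_j q_j\le Q_n$), whereas the paper instead reuses Lemmas \ref{lemma2} and \ref{lemma0nnT}; both give $cM_k/M_N$.
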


{\bf Proof}:
Let $x\in I_N^{k,l},$ for $0\leq k<l\leq N-1$ and $t\in I_N.$ First, we observe that $x-t\in $ $I_N^{k,l}.$ Next, we apply Lemmas \ref{gatkn} and  \ref{lemma0nnT} to obtain that
\begin{eqnarray}  \label{8881}
&&\int_{I_N}\left\vert\frac{1}{Q_n}\overset{n-1}{\underset{j=M_{N}}{\sum }}q_{j}D_j\left( x-t\right)\right\vert d\mu\left(t\right)  \\ \notag
&\leq & \frac{c}{n}\underset{i=0}{\overset{\left\vert n\right\vert }{\sum}}M_i\int_{I_N}\left\vert K_{M_i}\left(x-t\right)\right\vert d\mu
\left(t\right) \\ \notag
&\leq& \frac{c}{n}\int_{I_N}\overset{l}{\underset{i=0}{\sum }}M_iM_kd\mu\left(t\right) \leq \frac{cM_kM_l}{nM_N}
\end{eqnarray}
and the first estimate is proved.

Now, let $x\in I_N^{k,N}$. Since $x-t\in I_{N}^{k,N}$ for $t\in I_N,$ by
combining again Lemmas \ref{lemma2} and  \ref{lemma0nnT} we have that
\begin{eqnarray} \label{111100}
&&\int_{I_N}\left\vert\frac{1}{Q_n}\overset{n-1}{\underset{j=M_{N}}{\sum }}q_{j}D_j\left( x-t\right)\right\vert d\mu\left(t\right)   \\ \notag
&\leq&\frac{c}{n}\underset{i=0}{\overset{\left\vert n\right\vert } {\sum}}M_i\int_{I_N}\left\vert K_{M_i}\left(x-t\right)\right\vert d\mu\left(t\right) \\ \notag
&\leq&\frac{c}{n}\overset{\left\vert n\right\vert-1} {\underset{i=0}{\sum }}M_i\int_{I_N}M_kd\mu\left(t\right)\leq\frac{cM_k}{M_N}.
\end{eqnarray}

By combining (\ref{8881}) and (\ref{111100}) we complete the proof.

\begin{lemma} \label{lemma5bT}
	Let $n\geq M_N, \ \ x\in I_N^{k,l}, \ \ k=0,\dots,N-1, \ \
	l=k+1,\dots,N$ and $\{q_k:k\in\mathbb{N}\}$ be a sequence of
	non-increasing sequence, satisfying condition \eqref{fn0111}. Then
	\begin{equation*}
	\int_{I_N}\left\vert\frac{1}{Q_n}\overset{n-1}{\underset{j=M_{N}}{\sum }}q_{j}D_j\left( x-t\right)\right\vert d\mu\left(
	t\right)\leq\frac{cM_lM_k}{M_N^2},
	\end{equation*}%
	where $c$ is an absolute constant.
\end{lemma}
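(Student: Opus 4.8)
The plan is to deduce this estimate directly from Lemma \ref{lemma5a}, exploiting the extra hypothesis $n \geq M_N$ to replace the factor $1/n$ occurring there by $1/M_N$. The index range $k = 0, \dots, N-1$, $l = k+1, \dots, N$ of the present lemma splits exactly into the two ranges treated in Lemma \ref{lemma5a}, so I would argue case by case. (When $n = M_N$ the inner sum $\sum_{j=M_N}^{n-1}$ is empty and the left-hand side vanishes, so we may assume $n > M_N$.)

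In the case $l \leq N-1$, that is $k = 0, \dots, N-2$ and $l = k+1, \dots, N-1$, the first estimate of Lemma \ref{lemma5a} bounds the integral by $cM_lM_k/(nM_N)$. Since $n \geq M_N$ gives $1/n \leq 1/M_N$, this is at most $cM_lM_k/M_N^2$, as required. In the remaining case $l = N$, with $k = 0, \dots, N-1$, the second estimate of Lemma \ref{lemma5a} bounds the integral by $cM_k/M_N$; since $M_l = M_N$ here, this equals $cM_N M_k/M_N^2 = cM_lM_k/M_N^2$. The two cases together exhaust all admissible pairs $(k,l)$ and yield the claimed bound.

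The argument is essentially bookkeeping, so no genuine obstacle arises; the only points demanding care are checking that the two index ranges in Lemma \ref{lemma5a} precisely tile the range $k = 0,\dots,N-1$, $l = k+1,\dots,N$ of the present lemma, and that in the boundary case $l = N$ the identity $M_l = M_N$ recovers exactly the target power $M_N^2$ in the denominator. Should one prefer a self-contained proof, the estimate can instead be re-derived in the manner of Lemma \ref{lemma5a}, combining Lemmas \ref{gatkn} and \ref{lemma0nnT} with the decomposition \eqref{1.1} of $\overline{I_N}$; but reducing to Lemma \ref{lemma5a} avoids repeating those kernel computations.
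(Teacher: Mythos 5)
Your proposal is correct and coincides with the paper's own argument: the paper's proof of this lemma is exactly the one-line reduction ``since $n\geq M_N$, apply Lemma \ref{lemma5a}'', and you have simply spelled out the bookkeeping (using $1/n\leq 1/M_N$ in the range $l\leq N-1$ and $M_l=M_N$ in the boundary case $l=N$). No further comment is needed.
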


{\bf Proof}:
Since $n\geq M_N$ if we apply Lemma \ref{lemma5a} we immediately get the  proof.

\begin{lemma}\label{lemma0nnT1}
	Let $\{q_k:k\in\mathbb{N}\}$ be a sequence of non-decreasing numbers satisfying  (\ref{fn01}).
	Then
	\begin{equation*}
	\left\vert F_n\right\vert\leq\frac{c}{n}\left\{\sum_{j=0}^{\left\vert n\right\vert }M_j\left\vert K_{M_j}\right\vert \right\},
	\end{equation*}
	where $c$ is an absolute constant.
\end{lemma}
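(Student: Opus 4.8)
The plan is to proceed exactly as in the proof of Lemma \ref{lemma0nnT}, but now exploiting that the sequence is non-decreasing. The starting point is the Abel-transformed representation \eqref{2c} of the kernel,
$$F_n=\frac{1}{Q_n}\left(\overset{n-2}{\underset{j=0}{\sum}}\left(q_j-q_{j+1}\right) jK_j+q_{n-1}(n-1)K_{n-1}\right),$$
together with the Fej\'er-kernel estimate \eqref{fn5}, which gives $j\left\vert K_j\right\vert\leq c\sum_{l=0}^{\left\vert n\right\vert}M_l\left\vert K_{M_l}\right\vert$ for every $j\leq n-1$ (since $j<n$ forces $\left\vert j\right\vert\leq\left\vert n\right\vert$, so the dominating sum may be taken up to $\left\vert n\right\vert$ uniformly). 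First I would apply the triangle inequality to \eqref{2c} and insert \eqref{fn5} termwise; this lets me factor the common sum $\sum_{l=0}^{\left\vert n\right\vert}M_l\left\vert K_{M_l}\right\vert$ out front and reduces matters to controlling the scalar quantity
$$\frac{1}{Q_n}\left(\overset{n-2}{\underset{j=0}{\sum}}\left\vert q_j-q_{j+1}\right\vert+q_{n-1}\right).$$

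For a non-decreasing sequence we have $\left\vert q_j-q_{j+1}\right\vert=q_{j+1}-q_j$, so the inner sum telescopes to $\sum_{j=0}^{n-2}\left\vert q_j-q_{j+1}\right\vert=q_{n-1}-q_0\leq q_{n-1}$. Hence the scalar factor is at most $2q_{n-1}/Q_n$, and condition \eqref{fn01} bounds this by $c/n$. Combining the two estimates then yields $\left\vert F_n\right\vert\leq (c/n)\sum_{l=0}^{\left\vert n\right\vert}M_l\left\vert K_{M_l}\right\vert$, which is precisely the claimed inequality.

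There is no serious obstacle here: the argument runs in complete parallel to Lemma \ref{lemma0nnT}, and the only structural difference is that the telescoping of $\sum\left\vert q_j-q_{j+1}\right\vert$ now produces $q_{n-1}$ (rather than the boundary term $q_{M_N}$ that appears in the non-increasing case), after which the monotonicity hypothesis \eqref{fn01} does all the remaining work. The single point worth a moment's care is the uniform applicability of \eqref{fn5} across the full range $0\leq j\leq n-1$, which is guaranteed by the observation above that $\left\vert j\right\vert\leq\left\vert n\right\vert$ for all such $j$.
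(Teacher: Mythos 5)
Your proof is correct and follows essentially the same route as the paper: Abel-transformed kernel representation \eqref{2c}, termwise use of \eqref{fn5} to factor out $\sum_{l=0}^{\left\vert n\right\vert}M_l\left\vert K_{M_l}\right\vert$, telescoping of $\sum\left\vert q_j-q_{j+1}\right\vert$ for the non-decreasing sequence, and then condition \eqref{fn01} to bound the scalar factor by $c/n$. The only difference is that you spell out the uniformity of \eqref{fn5} over $j\leq n-1$, which the paper leaves implicit.
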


\begin{proof}
	Since sequence $\{q_k:k\in \mathbb{N}\}$ be non-decreasing if we apply condition \eqref{fn01} we can conclude that
	\begin{eqnarray*}
		&&\frac{1}{Q_n}\left(\overset{n-2}{\underset{j=0}{\sum}}\left\vert q_j-q_{j+1}\right\vert+q_{n-1}\right) \\ &&\leq\frac{1}{Q_n}\left(\overset{n-2}{\underset{j=0}{\sum }}\left (q_{j+1}-q_j \right)+q_{n-1}\right)\\
		&\leq& \frac{2q_{n-1}-q_0}{Q_{n}}\leq \frac{q_{n-1}}{Q_{n}}
		\leq \frac{c}{n}.
	\end{eqnarray*}
	
	If we apply \eqref{2c}  and \eqref{fn5} we immediately get that
	\begin{eqnarray*}
		\left\vert F_n\right\vert &\leq & \left( \frac{1}{Q_n}\left( \overset{n-1}{\underset{j=1}{\sum }}\left\vert q_{j}-q_{j+1} \right\vert+q_0\right)\right)\sum_{i=0}^{\left\vert n\right\vert } M_i\left\vert K_{M_i}\right\vert \\
		&=&\left(\frac{1}{Q_n}\left(\overset{n-1}{\underset{j=1}{\sum}}\left(q_{j}-q_{j+1}\right)+q_0\right)\right) \sum_{i=0}^{\left\vert n\right\vert}M_i\left\vert K_{M_i}\right\vert \\
		&\leq & \frac{q_{n-1}}{Q_n}\sum_{i=0}^{\left\vert n\right\vert}M_i\left\vert K_{M_i}\right\vert\leq\frac{c}{n}\sum_{i=0}^{\left\vert n\right\vert }M_i\left\vert K_{M_i}\right\vert.
	\end{eqnarray*}
	The proof is complete.
\end{proof}

\begin{lemma} \label{lemma5aT}
	Let $x\in I_N^{k,l}, \ \ k=0,\dots,N-2, \ \ l=k+1,\dots ,N-1$
	and $\{q_k:k\in \mathbb{N}\}$ be a sequence of non-decreasing numbers,
	satisfying condition (\ref{fn01}). Then
	\begin{equation*}
	\int_{I_N}\left\vert F_n\left(x-t\right)\right\vert d\mu\left(t\right) \leq\frac{cM_lM_k}{nM_N}.
	\end{equation*}
	Let $x\in I_N^{k,N},$ \ \ $k=0,\dots,N-1.$ Then
	\begin{equation*}
	\int_{I_N}\left\vert F_n\left(x-t\right)\right\vert d\mu\left(t\right) \leq\frac{cM_k}{M_N}.
	\end{equation*}%
	Here $c$ is an absolute constant.
\end{lemma}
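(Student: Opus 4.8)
The plan is to transcribe the proof of Lemma \ref{lemma5a} almost verbatim, the only structural change being that the pointwise kernel estimate of Lemma \ref{lemma0nnT} is replaced by its non-decreasing counterpart, Lemma \ref{lemma0nnT1}. The point is that for non-decreasing coefficients obeying \eqref{fn01} the \emph{whole} kernel $F_n$ (rather than just a tail of it) already satisfies $\left\vert F_n\right\vert\leq\frac{c}{n}\sum_{j=0}^{\left\vert n\right\vert}M_j\left\vert K_{M_j}\right\vert$, so exactly the same estimation scheme applies once $F_n$ is fed into the integral.

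First I would fix $x\in I_N^{k,l}$ with $0\leq k<l\leq N-1$, take $t\in I_N$, and set $y:=x-t$. The crucial geometric observation is that then $y\in I_N^{k,l}$ as well, so $y\in I_k\backslash I_{k+1}$ with its first nonzero coordinate at position $k$ and its next nonzero coordinate at position $l$. Applying Lemma \ref{lemma0nnT1} under the integral reduces the task to bounding $\frac{c}{n}\sum_{i=0}^{\left\vert n\right\vert}M_i\int_{I_N}\left\vert K_{M_i}(y)\right\vert d\mu(t)$. Here Lemma \ref{gatkn} supplies the pointwise behaviour of $K_{M_i}(y)$: since $y\in I_k\backslash I_{k+1}$ with $\left\vert 1-r_k(y)\right\vert$ bounded away from zero, one reads off that $\left\vert K_{M_i}(y)\right\vert\leq cM_k$ for $i\leq l$ (whether $i\leq k$, via the value $\tfrac{M_i-1}{2}$, or $k<i\leq l$, via $\tfrac{M_k}{1-r_k(y)}$ together with $y-y_ke_k\in I_i$), while $K_{M_i}(y)=0$ for $i>l$, because the nonzero coordinate at level $l$ forces $y$ outside both $I_i$ and $y_ke_k+I_i$. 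Thus only the terms $i\leq l$ survive; the geometric estimate $\sum_{i=0}^{l}M_i\leq cM_l$ together with $\mu(I_N)=1/M_N$ yields the first bound $cM_lM_k/(nM_N)$.

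For the second estimate I would take $x\in I_N^{k,N}$ and again note $y=x-t\in I_N^{k,N}$ for $t\in I_N$, so that $y\in I_k\backslash I_{k+1}$ and $y-y_ke_k\in I_N$. Lemma \ref{gatkn} then gives $\left\vert K_{M_i}(y)\right\vert\leq cM_k$ in every nonvanishing case. The difference from the first estimate is that now no coordinate below level $N$ forces the kernel to vanish, so the surviving range of $i$ runs all the way up to $\left\vert n\right\vert$; since $\sum_{i=0}^{\left\vert n\right\vert}M_i\leq cM_{\left\vert n\right\vert}\leq cn$, the factor $1/n$ coming from Lemma \ref{lemma0nnT1} is exactly absorbed, and combined with $\mu(I_N)=1/M_N$ this produces the second bound $cM_k/M_N$.

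I do not anticipate a genuine obstacle, as the argument is a line-by-line copy of Lemma \ref{lemma5a} with Lemma \ref{lemma0nnT1} in place of Lemma \ref{lemma0nnT}. The one point that demands care is the bookkeeping in Gat's formula (Lemma \ref{gatkn}) that pins down the precise cutoff: the kernel $K_{M_i}(y)$ is $O(M_k)$ exactly for $i\leq l$ and vanishes beyond. It is this cutoff that separates the geometric sum capped at $M_l$ in the first estimate from the full sum of order $n$ in the second, and hence accounts for the extra factor $1/n$ present in one bound but not the other.
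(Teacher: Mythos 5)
Your proposal is correct and follows exactly the route the paper intends: the paper's own ``proof'' merely states that the argument is analogous to Lemma \ref{lemma5a}, and your write-up supplies precisely that analogy, substituting the pointwise bound of Lemma \ref{lemma0nnT1} for that of Lemma \ref{lemma0nnT} and then running the same estimates via Lemma \ref{gatkn}, the cutoff at $i\leq l$ in the first case, and the absorption of $\sum_{i\leq\left\vert n\right\vert}M_i\leq cn$ by the factor $1/n$ in the second.
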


{\bf Proof}:
The proof is quite analogously to Lemma \ref{lemma5a}. So we leave out the details.

\begin{lemma} \label{lemma5b}
	Let $n\geq M_N, \ \ x\in I_N^{k,l}, \ \ k=0,\dots,N-1, \ \
	l=k+1,\dots,N$ and $\{q_k:k\in\mathbb{N}\}$ be a sequence of
	non-decreasing sequence, satisfying condition (\ref{fn01}). Then
	\begin{equation*}
	\int_{I_N}\left\vert F_n\left(x-t\right)\right\vert d\mu\left(
	t\right)\leq\frac{cM_lM_k}{M_N^2}.
	\end{equation*}%
\end{lemma}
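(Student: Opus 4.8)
The plan is to derive Lemma \ref{lemma5b} directly from Lemma \ref{lemma5aT}, exactly as Lemma \ref{lemma5bT} was obtained from Lemma \ref{lemma5a}. The only new ingredient I would use is the hypothesis $n\geq M_N$, which converts the factor $1/n$ appearing in the off-diagonal estimate of Lemma \ref{lemma5aT} into the factor $1/M_N$ demanded here. Notice that the index ranges match: the two cases of Lemma \ref{lemma5aT} (namely $k=0,\dots,N-2,\ l=k+1,\dots,N-1$ and $l=N,\ k=0,\dots,N-1$) together cover precisely the range $k=0,\dots,N-1,\ l=k+1,\dots,N$ of the present statement.

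First I would treat the case $x\in I_N^{k,l}$ with $k=0,\dots,N-2$ and $l=k+1,\dots,N-1$. Here Lemma \ref{lemma5aT} already gives $\int_{I_N}\left\vert F_n(x-t)\right\vert d\mu(t)\leq cM_lM_k/(nM_N)$. Since $n\geq M_N$ we have $1/n\leq 1/M_N$, so this bound is at most $cM_lM_k/M_N^2$, which is exactly the claimed estimate.

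Next I would handle the remaining diagonal case $x\in I_N^{k,N}$ with $k=0,\dots,N-1$, i.e. $l=N$. In this range Lemma \ref{lemma5aT} yields $\int_{I_N}\left\vert F_n(x-t)\right\vert d\mu(t)\leq cM_k/M_N$. Because $l=N$ forces $M_l=M_N$, I can rewrite $cM_k/M_N=cM_kM_N/M_N^2=cM_kM_l/M_N^2$, again matching the asserted bound. Combining the two cases completes the argument.

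There is no genuine obstacle: the entire analytic content already resides in Lemma \ref{lemma5aT}, and the passage to Lemma \ref{lemma5b} is the elementary observation that $n\geq M_N$ sharpens $1/n$ to $1/M_N$ off the diagonal, while $M_l=M_N$ on the diagonal $l=N$. The only point to keep in mind is that the two sub-estimates of Lemma \ref{lemma5aT} must be merged into the single uniform bound $cM_lM_k/M_N^2$, which the two remarks above accomplish; hence the proof is immediate.
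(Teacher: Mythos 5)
Your proposal is correct and follows essentially the same route as the paper, which likewise deduces Lemma \ref{lemma5b} from Lemma \ref{lemma5aT} using $n\geq M_N$; you merely spell out the two cases (using $1/n\leq 1/M_N$ off the diagonal and $M_l=M_N$ when $l=N$) that the paper leaves implicit.
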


{\bf Proof}:
Since $n\geq M_N$ if we apply Lemma \ref{lemma5aT} we immediately get the  proof.

\section{Proofs of the Theorems}

\begin{proof} [Proof of Theorem \ref{theorem3fejermax2222}]
Let $0<p\leq 1/2$ and sequence $\{q_{k}:k\geq 0\}$ be non-increasing. By combining (\ref{2b}) and (\ref{2cc2})  we get that
\begin{eqnarray*}
	&&\widetilde{T}_{p}^{\ast }f:=\frac{\left\vert T_{n}f\right\vert}{{\left( n+1\right) ^{1/p-2}\log ^{2\left[ 1/2+p\right] }\left( n+1\right) }} \\
	&\leq &\frac{1}{{\left( n+1\right) ^{1/p-2}\log ^{2\left[ 1/2+p%
				\right] }\left( n+1\right) }}\left\vert \frac{1}{Q_{n}}\overset{n-1}{
		\underset{j=1}{\sum }}q_{j}S_{j}f\right\vert \\
	&\leq &\frac{1}{\left( n+1\right) ^{1/p-2}\log ^{2\left[ 1/2+p%
				\right] }\left( n+1\right) }\frac{1}{Q_{n}}\left( \overset{n-2}{\underset{j=1}{\sum }}\left\vert
	q_{j}-q_{j+1}\right\vert j\left\vert \sigma _{j}f\right\vert
	+q_{n-1}(n-1)\left\vert \sigma _{n}f\right\vert \right) \\
	&\leq &\frac{1}{Q_{n}}\left( \overset{n-2}{\underset{j=1}{\sum }} \frac{\left\vert
		q_{j}-q_{j+1}\right\vert j\left\vert \sigma _{j}f\right\vert}{{\left( j+1\right)^{1/p-2}\log ^{2\left[ 1/2+p\right] }\left( j+1\right) }}
	+\frac{q_{n-1}(n-1)\left\vert \sigma _{n}f\right\vert}{\left( n+1\right)^{1/p-2}\log ^{2\left[ 1/2+p\right] }\left( n+1\right) } \right) \\
	&\leq &\frac{1}{Q_{n}}\left( \overset{n-2}{\underset{j=1}{\sum }}\left(
	q_{j}-q_{j+1}\right) j+q_{n-1}(n-1)\right) \sup_{n\in \mathbb{N}_{+}}\frac{\left\vert
		\sigma _{n}f\right\vert }{\left( n+1\right) ^{1/p-2}\log ^{2\left[ 1/2+p%
			\right] }\left( n+1\right) } \\
	&\leq& \sup_{n\in \mathbb{N}_{+}}
		\frac{\left\vert
		\sigma _{n}f\right\vert }{\left( n+1\right) ^{1/p-2}\log ^{2\left[ 1/2+p
		\right] }\left( n+1\right) }:=\widetilde{\sigma }_{p}^{\ast }f,
\end{eqnarray*}
so that
$
\widetilde{T}_{p}^{\ast }f\leq \widetilde{\sigma }_{p}^{\ast }f.  \label{12aaaa}
$ Hence, if we apply
Theorem T1 we can conclude that the maximal operators  $\widetilde{T}_{p}^{\ast }$ of  $T$ means with non-increasing sequence $\{q_{k}:k\geq 0\}$ are bounded from the Hardy space $H_{p}$ to the space $L_{p}$ for $0<p\leq 1/2$. The proof  is complete.
\end{proof}

\begin{proof} [Proof of Theorem \ref{theorem3fejermax22221}]
Let $0<p\leq 1/2$ and sequence $\{q_{k}:k\geq 0\}$ be non-decreasing satisfying the condition \eqref{fn01}. By combining (\ref{2b}) and (\ref{2cc2})  we find that
	\begin{eqnarray*}
		&&\widetilde{T}_{p}^{\ast }f:=\frac{\left\vert T_{n}f\right\vert}{{\left( n+1\right) ^{1/p-2}\log ^{2\left[ 1/2+p\right] }\left( n+1\right) }} \\
		&\leq &\frac{1}{{\left( n+1\right) ^{1/p-2}\log ^{2\left[ 1/2+p%
					\right] }\left( n+1\right) }}\left\vert \frac{1}{Q_{n}}\overset{n-1}{
			\underset{j=1}{\sum }}q_{j}S_{j}f\right\vert \\
		&\leq &\frac{1}{\left( n+1\right) ^{1/p-2}\log ^{2\left[ 1/2+p%
				\right] }\left( n+1\right) }\frac{1}{Q_{n}}\left( \overset{n-2}{\underset{j=1}{\sum }}\left\vert
		q_{j}-q_{j+1}\right\vert j\left\vert \sigma _{j}f\right\vert
		+q_{n-1}(n-1)\left\vert \sigma _{n}f\right\vert \right) \\
		&\leq &\frac{1}{Q_{n}}\left( \overset{n-2}{\underset{j=1}{\sum }} \frac{\left\vert
			q_{j}-q_{j+1}\right\vert j\left\vert \sigma _{j}f\right\vert}{{\left( j+1\right)^{1/p-2}\log ^{2\left[ 1/2+p\right] }\left( j+1\right) }}
		+\frac{q_{n-1}(n-1)\left\vert \sigma _{n}f\right\vert}{\left( n+1\right)^{1/p-2}\log ^{2\left[ 1/2+p\right] }\left( n+1\right) } \right) \\
		&\leq &\frac{1}{Q_{n}}\left( \overset{n-2}{\underset{j=1}{\sum }}\left(
		q_{j+1}-q_{j}\right) j+q_{n-1}(n-1)\right) \sup_{n\in \mathbb{N}_{+}}\frac{\left\vert
			\sigma _{n}f\right\vert }{\left( n+1\right) ^{1/p-2}\log ^{2\left[ 1/2+p%
				\right] }\left( n+1\right) } \\
		&\leq& \frac{2q_{n-1}(n-1)-Q_n}{Q_{n}}\sup_{n\in \mathbb{N}_{+}}
		\frac{\left\vert
			\sigma _{n}f\right\vert }{\left( n+1\right) ^{1/p-2}\log ^{2\left[ 1/2+p
				\right] }\left( n+1\right) }\\
		&\leq& \sup_{n\in \mathbb{N}_{+}}
		\frac{\left\vert
			\sigma _{n}f\right\vert }{\left( n+1\right) ^{1/p-2}\log ^{2\left[ 1/2+p
				\right] }\left( n+1\right) }=\widetilde{\sigma }_{p}^{\ast }f.
	\end{eqnarray*}

	so that
	\begin{equation}
	\widetilde{T}_{p}^{\ast }f\leq \widetilde{\sigma }_{p}^{\ast }f  \label{12aaaaa}
	\end{equation}
	
	If we apply (\ref{12aaaaa}) and Theorem T1 we can conclude that the maximal operators  $\widetilde{T}_{p}^{\ast }$ of  $T$ means with non-decreasing sequence $\{q_{k}:k\geq 0\},$ are bounded from the Hardy space $H_{p}$ to the space $L_{p}$ for $0<p\leq 1/2.$ The proof   is complete.
\end{proof}

\begin{proof}[Proof of Theorem \protect\ref{theorem2fejerstrong}] Let $0<p< 1/2$ and sequence $\{q_{k}:k\geq 0\}$ be non-increasing. By Lemma \ref{lemma2.1}, the proof of part a)  will be complete, if we show that
	\begin{equation*}
	\overset{\infty}{\underset{m=1}{\sum }}%
	\frac{\left\Vert T_{m}a\right\Vert _{H_{p}}^{p}}{m^{2-2p}}\leq c_{p},
	\end{equation*}
	for every $p$-atom $a,$ with support$\ I$, $\mu \left( I\right) =M_{N}^{-1}.$
	We may assume that $I=I_{N}.$ It is easy to see that $S_{n}\left( a\right)
	=T_{n}\left( a\right) =0,$ when $n\leq M_{N}$. Therefore, we can suppose
	that $n>M_{N}$.
	
	Let $x\in I_{N}.$ Since $T_{n}$ is bounded from $L_{\infty }$ to $L_{\infty
	} $ (boundedness follows Lemma \ref{T2}) and $\left\Vert a\right\Vert
	_{\infty }\leq M_{N}^{1/p}$ we obtain that 
	\begin{equation*}
	\int_{I_{N}}\left\vert T_{m}a\right\vert ^{p}d\mu \leq \frac{\left\Vert
		a\right\Vert _{\infty }^{p}}{M_{N}}\leq c<\infty. 
	\end{equation*}%
	Hence, 
	\begin{equation} \label{14b14}
	\overset{\infty}{\underset{m=1}{\sum }}%
	\frac{\int_{I_{N}}\left\vert T_{m}a\right\vert ^{p}d\mu }{m^{2-2p}}\leq 
	\overset{\infty}{\underset{k=1}{\sum }}%
	\frac{1}{m^{2-2p}}\leq c<\infty ,\text{ \ }0<p< 1/2.  
	\end{equation}
	
	It is easy to see that 
	\begin{eqnarray}\label{14a14}
	\left\vert T_{m}a\left( x\right) \right\vert &=&\left\vert\int_{I_{N}} a\left(t\right) F_{n}\left( x-t\right)  d\mu \left( t\right)\right\vert\\ \notag
	&=&\left\vert\int_{I_{N}} a\left( t\right) \frac{1}{Q_{n}}\overset{n}{\underset%
		{j=M_{N}}{\sum }}q_{j}D_{j}\left( x-t\right)  d\mu \left(
	t\right) \right\vert  \\ \notag
	&\leq& \left\Vert a\right\Vert _{\infty }\int_{I_{N}}\left\vert \frac{1}{Q_{n}}%
	\overset{n}{\underset{j=M_{N}}{\sum }}q_{j}D_{j}\left( x-t\right)
	\right\vert d\mu \left( t\right) \\ \notag
	&\leq& M_{N}^{1/p}\int_{I_{N}}\left\vert 
	\frac{1}{Q_{n}}\overset{n}{\underset{j=M_{N}}{\sum }}q_{j}D_{j}\left(
	x-t\right) \right\vert d\mu \left( t\right) 
	\end{eqnarray}
	
	Let $T_{n}$ be $T$ means, with non-decreasing coefficients $%
	\{q_{k}:k\geq 0\}$ and $x\in I_{N}^{k,l},\,0\leq k<l\leq N.$ Then, in the
	view of Lemma \ref{lemma5aa} we get that 
	\begin{equation} \label{12q1}
	\left\vert T_{m}a\left( x\right) \right\vert \leq cM_{l}M_{k}M_{N}^{1/p-2},%
	\text{ for }0<p< 1/2.  
	\end{equation}
	
	Let $0<p<1/2.$ By using (\ref{1.1}), (\ref{14a14})  and (\ref{12q1}) we find that%
	\begin{eqnarray}\label{7aaa}
	\int_{\overline{I_{N}}}\left\vert T_{m}a\right\vert ^{p}d\mu &=&\overset{N-2}{%
		\underset{k=0}{\sum }}\overset{N-1}{\underset{l=k+1}{\sum }}%
	\sum\limits_{x_{j}=0,\text{ }j\in \{l+1,\dots
		,N-1\}}^{m_{j-1}}\int_{I_{N}^{k,l}}\left\vert T_{m}a\right\vert ^{p}d\mu +%
	\overset{N-1}{\underset{k=0}{\sum }}\int_{I_{N}^{k,N}}\left\vert
	T_{m}a\right\vert ^{p}d\mu   \\ \notag
	&\leq& c\overset{N-2}{\underset{k=0}{\sum }}\overset{N-1}{\underset{l=k+1}{%
			\sum }}\frac{m_{l+1}\dotsm m_{N-1}}{M_{N}}\left( M_{l}M_{k}\right)
	^{p}M_{N}^{1-2p}+\overset{N-1}{\underset{k=0}{\sum }}\frac{1}{M_{N}}%
	M_{k}^{p}M_{N}^{1-p}  \\ \notag
	&\leq & cM_{N}^{1-2p}\overset{N-2}{\underset{k=0}{\sum }}\overset{N-1}{\underset%
		{l=k+1}{\sum }}\frac{\left( M_{l}M_{k}\right) ^{p}}{M_{l}}+\overset{N-1}{%
		\underset{k=0}{\sum }}\frac{M_{k}^{p}}{M_{N}^{p}}\leq cM_{N}^{1-2p}.
	\end{eqnarray}
	
	Moreover, according to (\ref{7aaa}) we get that
	\begin{equation} \label{14b15}
	\overset{\infty }{\underset{m=M_{N}+1}{\sum }}\frac{\int_{\overline{I_{N}}%
		}\left\vert T_{m}a\right\vert ^{p}d\mu }{m^{2-2p}}\leq \overset{\infty }{%
		\underset{m=M_{N}+1}{\sum }}\frac{cM_{N}^{1-2p}}{m^{2-2p}}<c<\infty ,\text{
		\ }\left( 0<p<1/2\right) .
	\end{equation}%
	The proof of part a) is complete by just combining \eqref{14b14} and \eqref{14b15}.

 Let $p=1/2$ and $T_{n}$ be $T$ means, with non-increasing coefficients $%
	\{q_{k}:k\geq 0\}$, satisfying condition (\ref{fn0111}). By Lemma \ref{lemma2.1}, the proof of part b)  will
	be complete, if we show that%
	\begin{equation*}
	\frac{1}{\log n}\overset{n}{\underset{m=1}{\sum }}%
	\frac{\left\Vert T_{m}a\right\Vert _{H_{1/2}}^{1/2}}{m}\leq c,
	\end{equation*}%
	for every $1/2$-atom $a,$ with support$\ I$, $\mu \left( I\right) =M_{N}^{-1}.$
	We may assume that $I=I_{N}.$ It is easy to see that $S_{n}\left( a\right)
	=T_{n}\left( a\right) =0,$ when $n\leq M_{N}$. Therefore, we can suppose
	that $n>M_{N}$.
	
	Let $x\in I_{N}.$ Since $T_{n}$ is bounded from $L_{\infty }$ to $L_{\infty
	} $ (boundedness follows from Lemma \ref{T2}) and $\left\Vert a\right\Vert
	_{\infty }\leq M_{N}^{2}$ we obtain that 
	\begin{equation*}
	\int_{I_{N}}\left\vert T_{m}a\right\vert ^{1/2}d\mu \leq \frac{\left\Vert
		a\right\Vert _{\infty }^{1/2}}{M_{N}}\leq c<\infty.
	\end{equation*}%
	Hence, 
	\begin{equation}
	\frac{1}{\log n}\overset{n}{\underset{m=1}{\sum }}%
	\frac{\int_{I_{N}}\left\vert T_{m}a\right\vert ^{1/2}d\mu }{m}\leq 
	\frac{1}{\log n}\overset{n}{\underset{k=1}{\sum }}%
	\frac{1}{m}\leq c<\infty .  \label{14b}
	\end{equation}
	
Analogously to \eqref{14a14} we find that
\begin{eqnarray} \label{14aa} 
\left\vert T_{m}a\left( x\right) \right\vert &=&\left\vert\int_{I_{N}} a\left( t\right) \frac{1}{Q_{n}}\overset{n}{\underset%
	{j=M_{N}}{\sum }}q_{j}D_{j}\left( x-t\right)  d\mu \left(
t\right) \right\vert  \\ \notag
&\leq& \left\Vert a\right\Vert _{\infty }\int_{I_{N}}\left\vert F_{m}\left(
x-t\right) \right\vert d\mu \left( t\right) 
\leq
M_{N}^{2}\int_{I_{N}}\left\vert F_{m}\left( x-t\right) \right\vert d\mu
\left( t\right) .
\end{eqnarray}

Let $x\in I_{N}^{k,l},\,0\leq k<l<N.$ Then, in the view of Lemma \ref{lemma5a}
we get that 
\begin{equation}
\left\vert T_{m}a\left( x\right) \right\vert \leq \frac{cM_{l}M_{k}M_{N}}{m}.
\label{13q1}
\end{equation}

Let $x\in I_{N}^{k,N}.$ Then, according to Lemma \ref{lemma5a} we obtain that 
\begin{equation}
\left\vert T_{m}a\left( x\right) \right\vert \leq cM_{k}M_{N}.  \label{13q2}
\end{equation}

By combining (\ref{1.1}), (\ref{14aa}), (\ref{13q1}) and (\ref{13q2}) we
obtain that%
\begin{eqnarray*}
&&\int_{\overline{I_{N}}}\left\vert T_{m}a\left( x\right) \right\vert
^{1/2}d\mu \left( x\right)
\\
&\leq& c\overset{N-2}{\underset{k=0}{\sum }}\overset{N-1}{\underset{l=k+1}{%
\sum }}\frac{m_{l+1}\dotsm m_{N-1}}{M_{N}}\frac{\left( M_{l}M_{k}\right)
^{1/2}M_{N}^{1/2}}{m^{1/2}}+\overset{N-1}{\underset{k=0}{\sum }}\frac{1}{%
M_{N}}M_{k}^{1/2}M_{N}^{1/2}  \\ 
&\leq& M_{N}^{1/2}\overset{N-2}{\underset{k=0}{\sum }}\overset{N-1}{\underset{
l=k+1}{\sum }}\frac{\left( M_{l}M_{k}\right) ^{1/2}}{m^{1/2}M_{l}}+\overset{
N-1}{\underset{k=0}{\sum }}\frac{M_{k}^{1/2}}{M_{N}^{1/2}}\leq \frac{%
cM_{N}^{1/2}N}{m^{1/2}}+c.
\end{eqnarray*}

It follows that%
\begin{eqnarray}\label{15b}
&&\frac{1}{\log n}\overset{n}{\underset{m=M_{N}+1}{\sum }}\frac{\int_{%
\overline{I_{N}}}\left\vert T_{m}a\left( x\right) \right\vert ^{1/2}d\mu
\left( x\right) }{m}\leq \frac{1}{\log n}\overset{n}{\underset{m=M_{N}+1}{%
\sum }}\left( \frac{cM_{N}^{1/2}N}{m^{3/2}}+\frac{c}{m}\right) <c<\infty .
\end{eqnarray}

The proof of part b) is completed by just combining (\ref{14b}) and (\ref%
{15b}).
\end{proof}

\begin{proof}[Proof of Theorem \protect\ref{theorem2fejerstrong1}]
If we use Lemmas \ref{lemma5aT} and \ref{lemma5b} and follows analogical steps of proof of   Theorem \ref{theorem2fejerstrong} we immediately get the proof of Theorem \protect\ref{theorem2fejerstrong1}. So, we leave out the details.
\end{proof}

\end{document}